\newtheoremstyle{plainNoItalics}{}{}{\normalfont}{}{\bfseries}{.}{ }{}
\theoremstyle{plain}
\newtheorem{thm}{Theorem}[section]
\theoremstyle{plainNoItalics}
\newtheorem{lem}[thm]{Lemma}
\newtheorem{rem}[thm]{Remark}
\numberwithin{equation}{section}
\newcommand{\inj}{\int_{I_j\cap\Omega}}
\newcommand{\ino}{\int_{\Omega}}
\newcommand{\V}{\mathcal{V}_h^r}
\newcommand{\jl}{{j-\frac{1}{2}}}
\newcommand{\p}{\pi_h^e}
\newcommand{\Ij}{{_{I_j\cap\Omega}}}
\newcommand{\rM}{\gamma_M}
\newcommand{\rA}{\gamma_A}
\newcommand{\Th}{\mathcal{T}_h}
\newcommand{\Fh}{\mathcal{F}_h}
\newcommand{\Fr}{\mathcal{F}_\Gamma}
\newcommand{\Tr}{\mathcal{T}_{\Gamma}}
\title{High order cut discontinuous Galerkin methods for hyperbolic conservation laws in one space dimension 
}
\author{Pei Fu\footnote{ Division of Scientific Computing, Department of Information Technology, Uppsala University, 75237 Uppsala, Sweden. Email: pei.fu@it.uu.se, gunilla.kreiss@it.uu.se.},
Gunilla Kreiss \footnotemark[1] 
}
\date{}
\begin{document}
\maketitle
\begin{abstract}
In this paper,  we develop a family of high order cut discontinuous Galerkin (DG) methods  for  hyperbolic conservation laws in one space dimension. The ghost penalty stabilization is used to stabilize the scheme for small cut elements. The analysis shows that our proposed methods have similar stability and accuracy properties as the standard DG methods on a regular mesh. We also prove that the cut DG method with piecewise constants in space is total variation diminishing (TVD). We use the strong  stability preserving Runge-Kutta method for time discretization and the time step is independent of the size of cut element.  Numerical examples demonstrate that the cut DG methods are high order accurate for smooth problems and perform well for discontinuous problems.

\bigskip
\noindent {\bf Keywords:} Hyperbolic conservation laws; Discontinuous Galerkin method;  Cut element method;  Stabilization; Condition number.
\end{abstract}

\section{Introduction}
In this paper, we will develop a high order cut discontinuous Galerkin (DG) method to solve one dimensional scalar hyperbolic conservation laws,
\begin{equation}\label{eq:linear:1d}
\left\{\begin{array}{l}
{u_t+(f(u))_x=0, \quad x\in\Omega=(x_l,x_r),\quad t>0}, \\
{u(x,0) = u_0(x),}
\end{array}\right.
\end{equation}
with inflow or periodic boundary condition.   The DG method was first introduced in 1973 by Reed and Hill \cite{Reed1973Triangular}.
A breakthrough  was made by Cockburn et al. in \cite{ShuDG2,ShuDG3,ShuDG4,ShuDG5} to solve hyperbolic conservation laws, coupled with  Runge-Kutta method \cite{shu1988efficient} for time discretization and total variation bounded (TVB) nonlinear limiters \cite{shu1987tvb} to achieve non-oscillatory properties in the presence of strong shocks.
 DG methods have low dispersion and dissipation errors for hyperbolic problems \cite{hesthaven2007nodal} and have broad applications in many areas.  We refer to \cite{li2005discontinuous,hesthaven2007nodal,riviere2008discontinuous} for more details.

Most applications based on DG methods are defined on fitted meshes.
To achieve the full potential accuracy of a DG method the mesh quality needs to be high. This requirement  can be problematic for problems posed on complicated domains and implies remeshing for problems on  moving domains or moving interfaces.  Typically the resulting mesh will contain small elements, which
for time-dependent problems may lead to very severe time-step restrictions. Small elements or cells cause severe problems also for continuous element methods and finite volume methods. Examples of efforts to overcome these difficulties are the finite volume based h-box method, Helzel et el \cite{berger2003h,berger2012simplified} and the recent stabilized DG method by Engwer et. al \cite{engwer2019stabilized}, where transport along the characteristics is explicitly taken  into account.
Many unfitted methods have also been developed, as for example
 the extended finite element method \cite{fries2010extended}, immersed boundary methods \cite{mittal2005immersed}, and unfitted finite element methods \cite{bordas2018geometrically}.

In  unfitted finite element methods, the physical domain is immersed in a background mesh. A possible approach is to solve for all degrees of  freedom corresponding to the smallest set of elements covering the physical domain. The weak forms defining the numerical scheme are defined on the physical domain. Thus, for each element integration will only be done over the part that  intersects the physical domain.  If the intersection is very small, this will result in ill-conditioning of the mass and stiffness matrices, and a very severe time-step restriction. There are two common approaches to overcome these problems. One is the cell merging (or agglomeration) \cite{Johansson2013,kummer2017extended,modisette2010toward,muller2017high,Qin2013,schoeder2018high}, where the small cut part is absorbed into a neighbour element by  extending the basis functions of the neighbour element. The other common approach is to add ghost penalty stabilization terms to the weak form \cite{burman2012fictitious,hansbo2014cut,massing2014stabilized,sticko2016stabilized, sticko2016higher}.
Some efforts have  been made to develop DG methods with ghost stabilization. G\"{u}rkan and Massing consider elliptic  problems \cite{GURKAN2019466} and stationary hyperbolic equations \cite{grkan2018stabilized}.

In this paper, we develop a family of cut DG methods with ghost penalty stabilization for first order hyperbolic problems. We consider linear and nonlinear scalar equations in one space dimension.  Our scheme is based on the standard DG methods, which saves considerable implementational effort.  We add stabilization terms on the interior  interfaces between cut elements and their neighbours. We will analyse the stability and accuracy of the resulting cut DG method, and discuss how the stabilization terms effect the condition numbers and eigenvalues of the involved matrices.  For the linear advection equation we prove an accuracy result, which is a half order lower compared to the accuracy of the standard DG method. However, we will report numerical observations showing optimal accuracy, both for linear and nonlinear problems. We also prove the total variation diminishing (TVD) property of the cut DG scheme with piecewise constants in space and explicit forward Euler time discretization. We can not prove the TVD property for the mean value (TVDM) for the proposed method based on high order polynomial spaces. However, we have observed numerically that the total variation is bounded for high order polynomials and explicit time stepping  for problems with smooth solutions, and for discontinuous solutions when we apply the TVD \textit{minmod} limiter. 
On coarse meshes with one small cut element and meshes with more small cut elements, some oscillations are triggered when a discontinuity passes a cut element and its neighbours. By applying a more robust limiting near cut elements, which explicitly lowers the polynomial degree when non-smoothness is detected, these oscillations can be avoided.

This paper is organized as follows. We will introduce notation and definitions of spaces and projections in Section \ref{sec:notapro}. The proposed cut DG method is given in Section \ref{sec:method}, and we discuss how the stabilization terms effect the condition number of the mass matrix and the eigenvalues of spatial discretization matrices. The stability and \emph{a priori} error estimate will be analysed in Section \ref{sec:stable}. Section \ref{sec:Numerical} contains some numerical results. The paper is concluded with a summary and a brief discussion of how the proposed ideas can be extended to hyperbolic systems and  higher dimensions.

\section{Discrete spaces  and projections}
\label{sec:notapro}
Let the computational domain be $\mathcal{T}=[x_{L},x_{R}]$ partitioned  by $x_{L}=x_{\frac{1}{2}}<x_{\frac{3}{2}}<\cdots<x_{N+\frac{1}{2}}=x_{R}$, which defines  the background mesh in which the physical domain $\Omega=[x_l,x_r]$ is immersed. Let $I_{j}=[x_{j-\frac{1}{2}},x_{j+\frac{1}{2}}]$ denote an element and $\mathcal{E}$ denote the set containing the edges in the background mesh. For simplicity, we assume all $I_j$'s have length $h$.  Define the cut mesh as
 \begin{align}
 &\Th =\{ I_j \cap\Omega\neq\emptyset: j=1,\cdots, N\},\Tr=\{I_j\in\Th|I_j\cap \Gamma \neq \emptyset\},\\
& \Fh=\{F \in \mathcal{E}\cap\Omega\}, \Fr=\{F\in\Fh=I_j\cap I_{k}|I_{j}\in\Tr, d(I_j,\Gamma)<0.5h, j\neq k\}.
 \end{align}
Here, $\Gamma$ is the boundary point $x_l$ or  one or more interface points $x_p$, each of which cuts one background element into two cut elements and $d(I_j,\Gamma)$ denotes the cut size of the element $I_j$ cut by $\Gamma$. We will consider two settings.
In the first, the boundary is immersed in the background mesh and the physical boundary point $x_l$ cuts the left most element $I_1$ with the cut size $\alpha h$, $\alpha\in[0,1]$.  We assume $\alpha\ll 1$, since this will be the difficult case. We have 
$\Tr=I_1, \Fr=x_{\frac{3}{2}}$.
\begin{figure}[tbhp]
\centering
\begin{tikzpicture}[xscale=4]
\draw[dotted][draw=black, very thick] (0,0) -- (.25,0);
\draw[-][draw=red, very thick] (0.25,0) -- (.4,0);
\node[red][above] at (0.32,0) {${\alpha h}$};
\draw[-][draw=black, very thick] (.4,0) -- (0.8,0);
\draw[dotted][draw=black, very thick] (0.8,0) -- (1.2,0);
\draw[-][draw=black, very thick] (1.2,0) -- (1.6,0);
\draw[dotted][draw=black, very thick] (1.6,0) -- (2.0,0);
\draw[-][draw=black, very thick] (2.0,0) -- (2.4,0);
\draw [thick] (0,-.5) node[below]{$x_L=x_{\frac{1}{2}}$} -- (0,0.2);
\draw[red][thick] (0.25,-.2)-- (0.25,.5);
\node[red][above] at (0.25,.5) {$x_l$};
\draw [thick] (0.4,-.1) node[below]{$x_{\frac{3}{2}}$} -- (0.4,0.2);
\draw [thick] (0.8,-.1) node[below]{$x_{\frac{5}{2}}$} -- (0.8,0.2);
\draw [thick] (1.2,-.1) node[below]{$x_{j-\frac{1}{2}}$} -- (1.2,0.2);
\draw [thick] (1.6,-.1) node[below]{$x_{j+\frac{1}{2}}$} -- (1.6,0.2);
\draw [thick] (2.0,-.1) node[below]{$x_{N-\frac{1}{2}}$} -- (2.0,0.2);
\draw [thick] (2.4,-.5) node[below]{$x_R=x_{N+\frac{1}{2}}$} -- (2.4,0.2);
\draw [red][thick] (2.4,-.2) -- (2.4,0.5);
\node [red][above] at (2.4,0.5) {$x_r$};
\draw [red][thick, <->] (0.25,.5) -- (2.4,0.5);
\node [red][above] at (1.2,0.4){physical domain $\Omega$};
\draw [black][thick, <->] (0.,-1) -- (2.4,-1);
\node [black][below] at (1.0,-1){computational domain $\mathcal{T}$};
\end{tikzpicture}
\caption{Discretization of the physical domain $\Omega$ and the computational domain $\mathcal{T}$.}\label{mesh_discretization}
\end{figure}
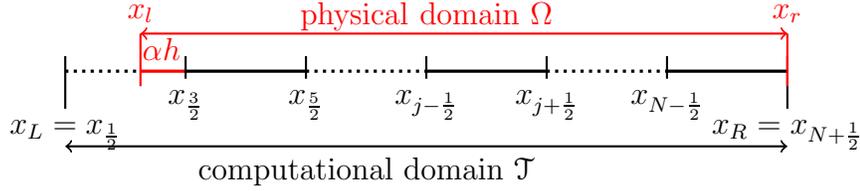

 In the second setting we consider problems with one or more interior interfaces, resulting in cut elements located in the interior part of the domain, see Figure \ref{fig:cutmiddle1}.  For each interface $x_p$ there are two cut elements with cut size  $\alpha h$ and $(1-\alpha)h$, respectively, and the computational domain $\mathcal{T}$ is equal to  $\Omega$. In this setting, $\Tr=I_J,\Fr=\{x_{J-\frac{1}{2}}\}$.
\begin{figure}[tbhp]
\centering
\begin{tikzpicture}[xscale=4]
\draw[-][draw=black, very thick] (0,0) -- (.4,0);
\draw[dotted][draw=black, very thick] (0.4,0) -- (0.8,0);
\draw[-][draw=black, very thick] (0.8,0) -- (1.2,0);
\draw[-][draw=red, very thick] (1.2,0) -- (1.35,0);
\node[red][above] at (1.3,0.1) {${\alpha h}$};
\draw[-][draw=red, very thick] (1.35,0) -- (1.6,0);
\node[red][above] at (1.5,0.3) {${(1-\alpha)h}$};
\draw[-][draw=black, very thick] (1.6,0) -- (2.0,0);
\draw[dotted][draw=black, very thick] (2.0,0) -- (2.4,0);
\draw[-][draw=black, very thick] (2.4,0) -- (2.8,0);
\draw [thick] (0,-.5) node[below]{$x_L=x_{\frac{1}{2}}$} -- (0,0.2);
\draw[red][thick] (0.0,-.2)-- (0.0,.5);
\node[red][above] at (0.0,.5) {$x_l$};
\draw [thick] (.4,-.1) node[below]{$x_{\frac{3}{2}}$} -- (0.4,0.2);
\draw [thick] (0.8,-.1) node[below]{$x_{J-\frac{3}{2}}$} -- (0.8,0.2);
\draw [thick] (1.2,-.1) node[below]{$x_{J-\frac{1}{2}}$} -- (1.2,0.2);
\draw [blue][thick] (1.35,-0.1) node[below]{$x_{p}$} -- (1.35,0.2);
\draw [thick] (1.6,-.1) node[below]{$x_{J+\frac{1}{2}}$} -- (1.6,0.2);
\draw [thick] (2.0,-.1) node[below]{$x_{J+\frac{3}{2}}$} -- (2.0,0.2);
\draw [thick] (2.4,-.1) node[below]{$x_{N-\frac{1}{2}}$} -- (2.4,0.2);
\draw [thick] (2.8,-.5) node[below]{$x_R=x_{N+\frac{1}{2}}$} -- (2.8,0.2);
\draw [red][thick] (2.8,-.2) -- (2.8,0.5);
\node [red][above] at (2.8,0.5) {$x_r$};
\end{tikzpicture}
\caption{Discretization of the physical domain $\Omega$ with the mesh partition having equal size $h=(x_r-x_l)/N$ and the middle element is split into two elements of length $\alpha h$ and $(1-\alpha)h$.}\label{fig:cutmiddle1}
\end{figure}
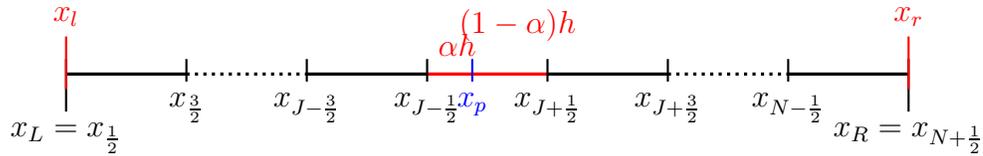
The background element $I_J$ includes  $[x_{J-\frac{1}{2}}, x_{p}]$ and $[x_p,x_{J+\frac{1}{2}}]$. Thus, $I_J$ will be used twice in the weak formulation as  background element for both cut elements.  
Note that for the periodic problem, a boundary point can equivalently be taken as an interior point, and analyzing the mesh setting in Figure \ref{mesh_discretization} suffices.

As for the standard DG method, we use the piecewise polynomial space
\begin{equation}
\V=\{{v_h} : {v_h}|_{I_j} \in P^r(I_j), \forall I_j \in \mathcal{T}_h\},
\end{equation}
where $P^{r}(I_j)$ is the space of polynomials with  degree at most $r$ in $I_j$.   We note that the  space $\V$ and basis functions of space $\V$ are defined on the background mesh. As usual, we define the average and the jump of a function $v$ at $x$ as
\begin{align}
 \{v\}=\frac{1}{2}(v^++v^-), \quad [v]=(v^+-v^-).
 \end{align}
For any $v\in\V$, $v^+=\lim\limits_{\epsilon\to 0^+}v(x+\epsilon)$ and $v^-=\lim\limits_{\epsilon\to 0^+}v(x-\epsilon) $ denote the limit values of $v$ at $x$ from right and left.
For square integrable functions on a given domain $K$, the  inner product and the $L^2$ norm are denoted
\begin{align}\label{L2}
(v,w)_{K}:=\int_{K} vwdx, \quad \|v\|_K:=\sqrt{(v,v)_{K}}, \quad \forall v, w\in L^2(K).
\end{align}

\section{The DG scheme}
\label{sec:method}
In this section, we will present the cut DG method  for the scalar equation \eqref{eq:linear:1d} with periodic boundary condition, on the mesh with one cut element on the left boundary, as in Figure \ref{mesh_discretization}. We will show how  we  stabilize and investigate how this effects the condition number and the eigenvalues of the  matrices from the  space discretization for the linear advection equation.

\subsection{The cut DG scheme without stabilization}
 In this subsection, we use  a methods of lines approach. We will discretize in space and give the semi-discrete DG method for the scalar hyperbolic equation \eqref{eq:linear:1d}. We multiply the equation \eqref{eq:linear:1d} by the test function $v_h$ and integrate it by parts. Then we get the semi-discrete DG method which is to  look for $u_h(\cdot,t)$ $\in \V$ such that for any $v_h$ $\in \V$, and for all $I_j\in\Th$,
\begin{align}
\label{scheme:linear:1d}
\left( u_{h,t},v_{h} \right)\Ij +\widehat{f}_{jr}v_{h,jr}^--\widehat{f}_{jl}v_{h,jl}^+- (f(u_h),(v_{h})_x)\Ij&=0.
\end{align}
Here $\widehat{f}_*=\widehat{f}\left(u_h^-(x_*,t), u_{h}^+(x_*,t))\right)$ is numerical flux function to approximate flux $f(u)$ at the point $x_*$, and  $x_{jl}$ and $x_{jr}$ denote the end points of $I_j\cap\Omega$ representing $x_l$ or $x_{j\pm\frac{1}{2}}$. As in the standard DG method \cite{shu2009discontinuous}, we use monotone fluxes  which satisfies consistency: $\widehat{f}(u,u)=f(u)$, continuity: $\widehat{f}\left(u^{-}, u^{+}\right)$ is at least Lipschitz continuous with respect to both arguments, and monotonicity $\widehat{f}(\uparrow,\downarrow)$: $\widehat{f}\left(u^{-}, u^{+}\right)$ is a non-decreasing function of its first argument  and a non-increasing function of its second argument. 
We sum the cut DG scheme \eqref{scheme:linear:1d} over $j$ and write it as
\begin{align}\label{scheme:cutDG0}
&\left({u}_{h,t}, v_h\right)_\Omega+a\left(u_{h}, v_h\right)=0, \quad \forall v_h \in \V,\\
&a(u,v)=\widehat{f}_{N+\frac{1}{2}} v^{-}_{N+\frac{1}{2}}-\widehat{f}_lv_l-\sum_{j=2}^{N}\widehat{f}_{j-\frac{1}{2}}[v]_{j-\frac{1}{2}}-\sum_{j=1}^{N}(f(u),v_x)_{I_j\cap\Omega}.
\label{def:auv}
\end{align}

Next, we will consider the matrix form of the cut DG scheme \eqref{scheme:cutDG0} for the periodic linear advection equation with $f(u)=\beta u$ and $\beta>0$ in \eqref{eq:linear:1d}
\begin{align}
&u_t+(\beta u)_x=0,\quad x\in(x_l,x_r),\quad t>0.
\label{eq:adv:1d}
\end{align}
Let $\{\phi^k_j(x)\}_{k=0}^r$ be the basis of $P^r$ on the element $I_j$, and express  $u_h(\cdot,t)$ $\in \V$ as
\begin{align}\label{eq:def:uh}
u_h|_{I_j}=\sum_{k=0}^{r}u^k_j(t)\phi^k_j(x),
\end{align}
with $u_j^k(t)$ being the unknown time-dependent coefficients of numerical solution $u_h$. 
Our implementation uses the orthogonal Legendre  polynomials $1, \xi_{j}$, $ \xi_{j}^{2}-1/3$, $\cdots,$ with $\xi_{j}=\frac{x-x_{j}}{h/ 2}$ as the basis functions in each element $I_j$ and $x_j$ denotes the centre point of $I_j$. Thus, the local mass matrices on regular elements are diagonal, while they will be full on the cut elements because of the integration over only a part of the element. 
Introduce $u_h$ \eqref{eq:def:uh} into the cut DG scheme \eqref{scheme:cutDG0} with the upwind flux $\widehat{f}(u_h^-,u_h^+)=\beta u_h^-$ for equation \eqref{eq:adv:1d} with periodic boundary condition leads to the matrix form of the semi-discrete problem,
\begin{equation}\label{eq:def:matrixform}
\mathcal{M}U_t=SU ,\text{or } U_t=\mathcal{M}^{-1}SU.
\end{equation}
Here $\mathcal{M}$ is the block-diagonal mass matrix,  $S$ is the stiffness matrix, and $U$ is the coefficients vector of $u_h$, which will be a smooth function of $t$. The temporal stability of the semi-discrete scheme \eqref{scheme:cutDG0} depends on the eigenvalues of ${\mathcal{M}}^{-1}{S}$. 
 For \eqref{eq:def:matrixform} to be stable the eigenvalues of ${\mathcal{M}}^{-1}{S}$ need to be in the negative half-plane.
 If this is satisfied,  eigenvalues with large absolute values will set the time-step limit for explicit time-stepping methods. Also interesting is the mass matrix condition number, since ill-conditioning here may cause difficulties in time-stepping.

 In Table \ref{table:withoutstabilization}, we show results for the domain $[0,2]$ with a background mesh of $8$ elements, where  one cut element is located on the left boundary. We use  $\beta=1$ and consider $\alpha=10^{-2},10^{-10}$.
Note that for the higher polynomial orders eigenvalues with positive real part appear as alpha goes to zero, which indicates instability. For lower orders  eigenvalues of large magnitude appear, which cause severe explicit time-stepping limits.
The results also indicate a sharply increasing  condition number of the  mass matrix,  with the degree of the polynomials space, and as  the size of the cut goes to zero. To overcome these problems, we will include stabilization.

\begin{table}[tbhp]
\caption{\label{table:withoutstabilization} Condition number of the mass matrix ${\mathcal{M}}$  and the maximal absolute value and maximal real part of the eigenvalue $v_i$ of the spacial operator (${\mathcal{M}}^{-1}S$) in the cut DG scheme without stabilization.  
}
\begin{scriptsize}
\begin{tabular}{|c|ccc|ccc|}
 \hline	
 &\multicolumn{3}{c|}{$\alpha={10^{-2}}$} &\multicolumn{3}{c|}{$\alpha={10^{-10}}$} \\\hline	
 degree	&	$\mathcal{K(M)}$ & $\max(|v_i|)$	&	$\max(Re(v_i))$		&$\mathcal{K(M)}$  &	$\max(|v_i|)$	&	$\max(Re(v_i))$	\\
\hline	
 $P^0$	&	1.00E+02	&	3.51E+02	&	-3.51E+00	&	1.00E+10	&	3.50E+10	&	-3.50E+00	\\
$P^1$	&	5.94E+06	&	8.59E+02	&	-6.97E+00	&	1.31E+26	&	3.50E+10	&	-6.96E+00	\\
$P^2$	&	7.48E+11	&	1.42E+03	&	-9.25E+00	&	2.65E+26	&	5.72E+10	&	1.24E+10	\\
$P^3$	&	1.15E+17	&	2.05E+03	&	-1.10E+01	&	2.34E+27	&	5.40E+12	&	5.40E+12	\\
\hline		
\end{tabular}
\end{scriptsize}
\end{table}

\subsection{The cut DG method with ghost penalty stabilization}
In this subsection the starting point is a scaled version of the ghost penalty suggested in \cite{massing2014stabilized},
\begin{equation}\label{stable0}
j_1(u, v)=\sum_{F \in \Fr} \sum_{k=1}^{r} w_kh^{2k+1} \left[\partial^{k} u\right]_F\left[\partial^{k} v\right]_{F}.	
\end{equation}
This stabilization  is also suggested for a wave propagation problem in \cite{sticko2016higher}.
In this work we propose to use
\begin{equation}\label{stable1}
J_s(u, v)=\sum_{F \in \Fr} \sum_{k=0}^{r}  w_{k}h^{2k+s}\left[\partial^{k} u\right]_F\left[\partial^{k} v\right]_{F}.	
\end{equation}
to stabilize both mass and stiffness matrices. Compared to $j_1(u,v)$ \eqref{stable0} we have included the jump of $u$ to ensure that all terms are stabilized.
With the stabilization $J_1(u_t,v)$ and $J_0(u,v)$, the semi-discrete cut DG scheme  has the following form:  look for $u_h(\cdot,t)$ $\in \V$ such that for $\forall v_h\in\V$,
\begin{align}\label{scheme:cutDG2}
(u_{h,t},v_h)_\Omega+\gamma_M J_1(u_{h,t},v_h)+a(u_h,v_h)+\gamma_A J_0(u_h,v_h)=0.
\end{align}
Here, $a(u,v)$ are defined in \eqref{def:auv} and $\rM,\rA$ are positive constants. With $v_h=1$ in \eqref{scheme:cutDG2}, we get
\begin{equation}
\frac{d}{dt}\int_\Omega u_hdx +\widehat{f}_{N+\frac{1}{2}}-\widehat{f}_l=0.
\end{equation}
Thus,  the cut DG scheme with stabilization $J_1(u_{h,t},v_h)$ and $J_0(u_h,v_h)$ is globally conservative. We note that it is locally conservative for all elements which do not have a cut element as a neighbour, and for on all patches of elements that are connected by faces where the stabilization is applied.

Next, we study the eigenvalues and condition numbers of the  matrices resulting from applying  the stabilized cut DG scheme \eqref{scheme:cutDG2}  to the linear advection equation \eqref{eq:adv:1d}  with parameter values $\rM=0.25,\rA=0.75$ and $\omega_k=\frac{1}{(2k+1)k!^2}$ as  in \cite{sticko2016higher}. We write the scheme \eqref{scheme:cutDG2} with the periodic boundary condition in the matrix form as
\begin{align}\label{scheme2}
\widetilde{\mathcal{M}}U_t=\widetilde{\mathcal{S}}U.
\end{align}
Here, $\widetilde{\mathcal{M}}$ is stabilized mass matrix with 
$\widetilde{\mathcal{M}}U_t=(u_{h,t},v_h)_\Omega+\gamma_M J_1(u_{h,t},v_h)$ and $\widetilde{\mathcal{S}}$ is stabilized stiffness matrix with 
$\widetilde{\mathcal{S}}U=-a(u_h,v_h)-\gamma_A J_0(u_h,v_h)$. 
In Table \ref{table:stablemassstiff} the condition number of $\tilde{\mathcal{M}}$, the maximal absolute value and the maximal real part of the eigenvalues of the spatial operator ($\widetilde{\mathcal{M}}^{-1}\widetilde{\mathcal{S}}$) are listed.   Comparing the results in Table \ref{table:stablemassstiff} and Table \ref{table:withoutstabilization}, the condition number of $\widetilde{\mathcal{M}}$ is much smaller than for the original mass matrix. Besides, the maximal absolute eigenvalue and its real part from the stabilized scheme are almost the same as  for the standard DG method on a uniform mesh in Table \ref{table:uniform}.  We therefore expect similar time-step restrictions for the stabilized cut DG method as for the standard  DG method. We have also tested the scheme with more small cut elements in the interior and get very similar results as in Table \ref{table:stablemassstiff}.
\begin{table}[htbp]
\caption{\label{table:stablemassstiff} {Condition number of mass matrix $\widetilde{\mathcal{M}}$  and maximal absolute value and maximal real part of the eigenvalue $v_i$ of the spatial operator ($\widetilde{\mathcal{M}}^{-1}\tilde{S}$) in the cut DG scheme with stabilization.
}}
\begin{scriptsize}
\begin{tabular}{|c|ccc|ccc|}
 \hline
 &\multicolumn{3}{c|}{$\alpha={10^{-2}}$} &\multicolumn{3}{c|}{$\alpha={10^{-10}}$} \\\hline	
 degree	&	$\mathcal{K(\widetilde{\mathcal{M}})}$ & $\max(|v_i|)$	&	$\max(Re(v_i))$	&$\mathcal{K(\widetilde{\mathcal{M}})}$  &	$\max(|v_i|)$	&	$\max(Re(v_i))$	\\
\hline	
$P^0$	&	6.53E+00	&	2.34E+01	&	3.47E-15		&	6.85E+00	&	2.45E+01	&	-8.39E-17	\\
$P^1$	&	4.79E+01	&	2.22E+01	&	5.93E-16		&	5.07E+01	&	2.45E+01	&	-2.56E-15	\\
$P^2$	&	3.77E+03	&	4.08E+01	&	1.12E-15		&	4.04E+03	&	4.11E+01	&	5.33E-16	\\
$P^3$	&	8.58E+05	&	6.69E+01	&	5.07E-15		&	9.39E+05	&	6.70E+01	&	-2.53E-16	\\
$P^4$	&	1.93E+08	&	9.65E+01	&	4.34E-15		&	2.16E+08	&	9.67E+01	&	-5.60E-16	\\
 \hline
\end{tabular}
\end{scriptsize}
\end{table}

In Figure \ref{figure:eigenvalue},  we plot the eigenvalues of  $\Delta t\widetilde{\mathcal{M}}^{-1}\widetilde{\mathcal{S}}$ from  the stabilized cut DG scheme \eqref{scheme:cutDG2} for different polynomials and varying  cut size with $\alpha=10^{-2},10^{-10}$. In our case the physical domain is $[0,2]$, and in the cut cases the background mesh consists of  8 elements,  of which two cut elements $[1,1+\alpha h], [1+\alpha h,1+h]$ are included as  in Figure \ref{fig:cutmiddle1}. The time steps are taken to be $\Delta t=\lambda h$ with $\lambda=0.3,0.2,0.1,0.1$ for $r=1,2,3,4\in\V$ polynomials, as for the standard DG method. From the results, we can observe that the eigenvalues are all located in the stable region of fourth order Runge-Kutta method. Thus, our proposed cut DG methods can use a similar time step as the standard DG method even with a very small cut cell.
\begin{figure}[tbhp]
\begin{center}
\includegraphics[width=2.2in]{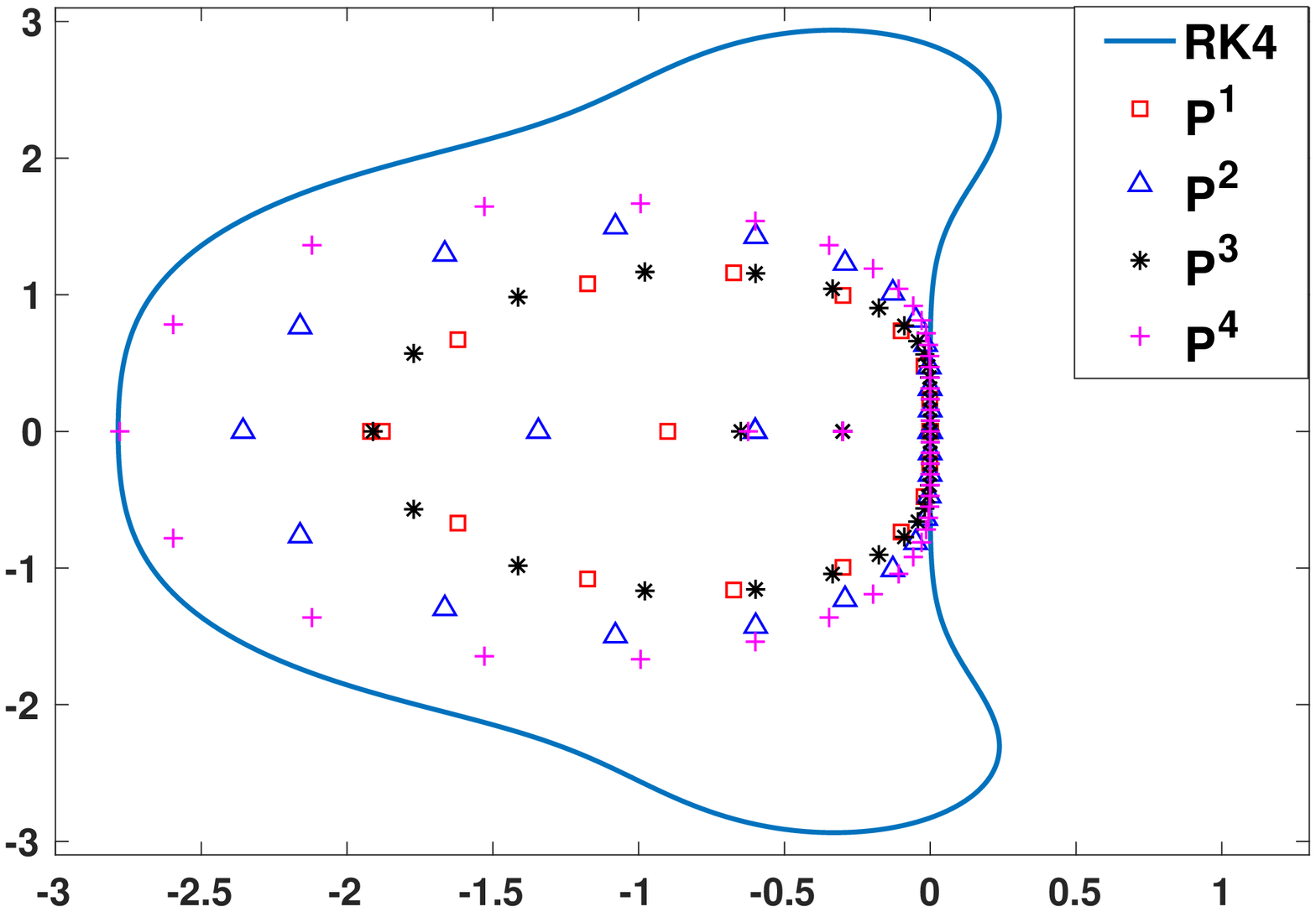}
\includegraphics[width=2.2in]{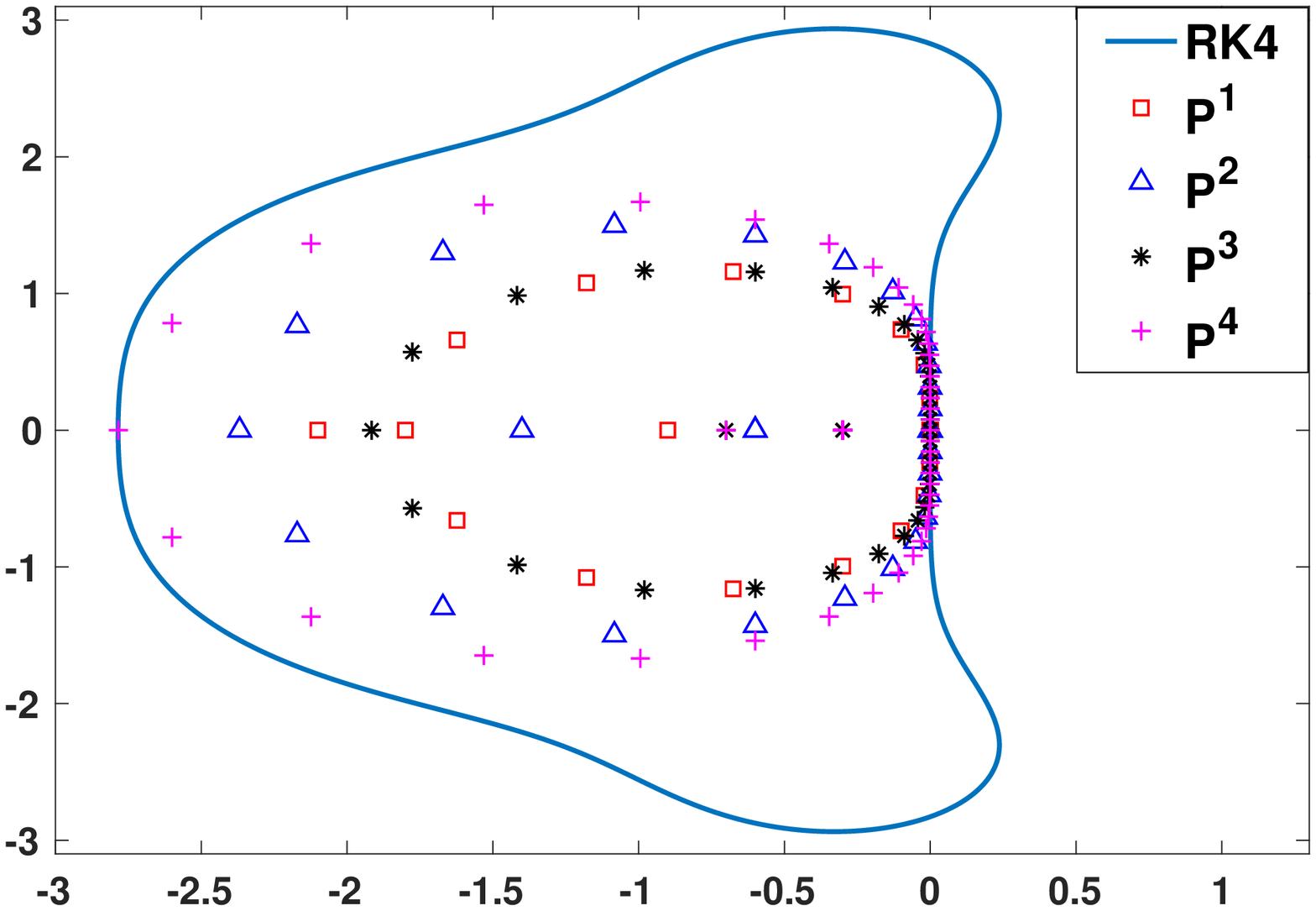}
\caption{The eigenvalue distributions $v$ of $\Delta t\widetilde{\mathcal{M}}^{-1}\widetilde{\mathcal{S}}$ from  the cut DG scheme with different polynomial spaces $P^r$ and $\alpha=10^{-2}$ (left), $10^{-10}$ (right) on the mesh setting in Figure \ref{fig:cutmiddle1}. }
\label{figure:eigenvalue}
\end{center}
\end{figure}

\subsection{Condition number of the mass matrix with stabilization}
Without stabilization the mass matrix is a block diagonal matrix with $(r+1)\times(r+1)$ local stiffness matrices as diagonal blocks. For regular elements these blocks are diagonal. With the stabilization term, the
mass matrix couples the degrees of freedom in cut elements  with those of their neighbours. This coupling gives rise to a $2k(r+1)\times2k(r+1)$ matrix with $k$ being the number of  stabilized cut elements. 
Note that the global mass matrix $\widetilde{\mathcal{M}}$ is still block-diagonal but not element by element.
Following the analysis in \cite{sticko2016higher}, we can  estimate the condition number of the mass matrix. 
\begin{lem}\label{lem:conditionnumber}
The condition number of the modified mass matrix with stabilization $J_1(u_t,v)$  with $\rM>0$ has an  upper bound
\begin{equation}
\kappa(\widetilde{\mathcal{M}}) \leq C_{M}\kappa\left(\mathcal{M}\right).
\end{equation}
Here, $\mathcal{M}$ is the mass matrix for the standard DG scheme on a uniform mesh. For fixed $\rM > 0$ and fixed weights $\omega_k$ the constant $C_M$ depends only on the polynomial order, and not on  $h$ or $\alpha$. It will grow unboundedly if $\rM$ approaches zero.
 \end{lem}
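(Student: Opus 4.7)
The plan is to interpret $U^{T}\mathcal{M}U$ and $U^{T}\widetilde{\mathcal{M}}U$ as quadratic forms in the discrete function $u_h\in\V$ with coefficient vector $U$, and then establish two norm-equivalences that are uniform in $h$ and in the cut fraction $\alpha$. Writing $\|\cdot\|_{\mathcal{T}}$ for the $L^{2}$-norm on the full background mesh, one has
\begin{equation*}
U^{T}\mathcal{M}U = \|u_h\|_{\mathcal{T}}^{2}, \qquad U^{T}\widetilde{\mathcal{M}}U = \|u_h\|_{\Omega}^{2}+\gamma_{M}J_{1}(u_h,u_h).
\end{equation*}
Since both matrices are symmetric positive definite, $\kappa(\widetilde{\mathcal{M}})=\lambda_{\max}(\widetilde{\mathcal{M}})/\lambda_{\min}(\widetilde{\mathcal{M}})$, so the claim reduces to showing constants $c_{1}(r,\gamma_{M}),c_{2}(r,\gamma_{M})>0$, independent of $h$ and $\alpha$, with
\begin{equation*}
c_{1}\,\|u_h\|_{\mathcal{T}}^{2} \;\le\; \|u_h\|_{\Omega}^{2}+\gamma_{M}J_{1}(u_h,u_h) \;\le\; c_{2}\,\|u_h\|_{\mathcal{T}}^{2},\qquad \forall u_h\in\V.
\end{equation*}
The ratio $c_{2}/c_{1}$ then multiplies $\kappa(\mathcal{M})$ to give the stated bound with $C_{M}=c_{2}/c_{1}$.

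For the upper bound, I would apply the standard inverse trace inequality on $P^{r}(I_{j})$, which gives $h^{2k+1}|\partial^{k}p(x_{j\pm 1/2})|^{2}\le C_{r,k}\|p\|_{I_{j}}^{2}$ for every $p\in P^{r}$. Summing over $F\in\Fr$ and combining with $\|u_h\|_{\Omega}^{2}\le\|u_h\|_{\mathcal{T}}^{2}$ yields $c_{2}=1+\gamma_{M}C_{r}$ depending only on the polynomial order and on the fixed weights $w_{k}$.

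The core of the proof is the lower bound, which is a uniform ghost-penalty coercivity estimate. For a cut element $I_{J}$ sharing the stabilized face $F\in\Fr$ with an uncut neighbour $I_{J'}$, set $p:=u_{h}|_{I_{J}}$ and $q:=u_{h}|_{I_{J'}}$. The difference $p-q$ is a polynomial of degree at most $r$, hence uniquely determined by its Taylor coefficients $[\partial^{k}u_{h}]_{F}/k!$ at $F$ for $k=0,\dots,r$; a reference-interval scaling argument, using equivalence of norms on $P^{r}$ over $[-1,1]$, gives
\begin{equation*}
\|p-q\|_{I_{J}}^{2} \;\le\; C_{r}\sum_{k=0}^{r} h^{2k+1}\bigl([\partial^{k}u_{h}]_{F}\bigr)^{2} \;\le\; \frac{C_{r}}{\min_{k}w_{k}}\,J_{1}(u_{h},u_{h}).
\end{equation*}
Combining the triangle inequality $\|p\|_{I_{J}}^{2}\le 2\|q\|_{I_{J}}^{2}+2\|p-q\|_{I_{J}}^{2}$ with the scaling $\|q\|_{I_{J}}^{2}\le C_{r}\|q\|_{I_{J'}}^{2}=C_{r}\|u_{h}\|_{I_{J'}\cap\Omega}^{2}$ (valid because $I_{J'}$ is uncut and both intervals have length $h$) controls $\|u_{h}\|_{I_{J}}^{2}$ by the neighbour's $L^{2}$-mass on $\Omega$ plus stabilization energy. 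Summing over the at most two cut elements arising in Figures~\ref{mesh_discretization}--\ref{fig:cutmiddle1}, and using $\|u_{h}\|_{I_{j}\cap\Omega}^{2}=\|u_{h}\|_{I_{j}}^{2}$ for every uncut element, one obtains $c_{1}=C_{r}^{-1}\min(1,\gamma_{M})$.

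The main obstacle is the scaling step in the lower bound. One must check that the equivalence constants on $P^{r}(I_{J})$ depend only on $r$ and not on $\alpha$, so that the Taylor data at $F$ provide $\alpha$-uniform control of the full-element $L^{2}$-norm. The role of the $k=0$ jump term in $J_{1}$ (present in \eqref{stable1} but absent from $j_{1}$ in \eqref{stable0}) is essential here, since without it the $L^{2}$-size of $p-q$ on $I_{J}$ cannot be recovered from the remaining derivative jumps. Finally, the $\gamma_{M}$-dependence of $C_{M}$ arises because $\gamma_{M}$ multiplies the stabilization in the lower inequality: as $\gamma_{M}\downarrow 0$, only $\|u_{h}\|_{\Omega}$ survives on the right, and since this norm alone does not control $\|u_{h}\|_{\mathcal{T}}$ uniformly in $\alpha$, $c_{1}\to 0$ and hence $C_{M}\to\infty$.
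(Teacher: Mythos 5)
Your proposal is correct and is essentially the argument the paper relies on: the paper itself gives no proof but defers entirely to the reference \cite{sticko2016higher}, where the bound is obtained exactly as you describe, by proving the two-sided, $\alpha$- and $h$-uniform equivalence of the stabilized quadratic form $\|u_h\|_{\Omega}^{2}+\gamma_M J_1(u_h,u_h)$ with the full background-mesh norm $\|u_h\|_{\mathcal{T}}^{2}$ (the lower bound coming from the Taylor/ghost-penalty control of the polynomial extension onto the cut element via its stabilized neighbour). Your observation that with the paper's weights $w_k=\tfrac{1}{(2k+1)(k!)^{2}}$ the Taylor expansion gives the lower bound with constant essentially $(r+1)$, and that $c_1\to 0$ as $\gamma_M\to 0$, correctly accounts for the stated dependence of $C_M$ on $r$ and $\gamma_M$ only.
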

For details and proof of Lemma \ref{lem:conditionnumber} we refer to the paper \cite{sticko2016higher}.

\section{Analysis of the cut DG method with stabilization}
\label{sec:stable}
In this section we  analyze the stabilized cut DG scheme \eqref{scheme:cutDG2} with respect to stability and accuracy.

\subsection{$L^2$ stability}
We will use the entropy inequality in \cite{jiang1994cell} to prove that the stabilized cut DG method \eqref{scheme:cutDG2} for  periodic scalar hyperbolic problems is $L^2$ stable.
\begin{thm}
With the parameters $\gamma_M\geq0, \gamma_A\geq0$ and flux $\widehat{f}$ satisfying consistency, monotonicity and continuity, the semi-discrete cut DG scheme \eqref{scheme:cutDG2} for the periodic scalar hyperbolic equation \eqref{eq:linear:1d} satisfies
 \begin{align*}
||u_h(\cdot,t)||^2_\Omega+\rM J_1(u_h(\cdot,t),u_h(\cdot,t))\leq ||u_h(\cdot,0)||^2_\Omega+\rM J_1(u_h(\cdot,0),u_h(\cdot,0)).
\end{align*}
\end{thm}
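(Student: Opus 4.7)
The plan is to test the semi-discrete scheme with $v_h = u_h$, recognise the mass contributions as a time derivative of $\|u_h\|_\Omega^2 + \gamma_M J_1(u_h, u_h)$, and show that all remaining spatial terms are non-negative. Integrating in time then gives the bound directly.

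First I will set $v_h = u_h$ in \eqref{scheme:cutDG2}. By the symmetry and bilinearity of $J_1$, the mass-type contributions combine into $\tfrac{1}{2}\tfrac{d}{dt}\!\bigl(\|u_h\|_\Omega^2 + \gamma_M J_1(u_h, u_h)\bigr)$. The stabilization $\gamma_A J_0(u_h, u_h) = \gamma_A \sum_{F\in\Fr}\sum_{k=0}^r w_k h^{2k}\,[\partial^k u_h]_F^2$ is manifestly non-negative whenever $\gamma_A \geq 0$ and the weights are positive, so it sits on the favourable side of the inequality and may be dropped.

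The real work is showing $a(u_h, u_h) \geq 0$. I will introduce an antiderivative $\Phi$ of the flux defined by $\Phi'(u) = f(u)$; then on each cut interval the fundamental theorem of calculus gives $(f(u_h),(u_h)_x)_{I_j\cap\Omega} = \Phi(u_h(x_{jr}^-)) - \Phi(u_h(x_{jl}^+))$. Summing over $j$ and telescoping, every interior interface $x_{j-1/2}$ produces a jump $[\Phi(u_h)]_{j-1/2}$ that pairs with the pre-existing term $-\widehat{f}_{j-1/2}[u_h]_{j-1/2}$ in $a$. Under the periodic identification one has $\widehat{f}_l = \widehat{f}_{N+\frac{1}{2}}$, so the two boundary evaluations at $x_l$ and $x_{N+\frac{1}{2}}$ consolidate into a single wrap-around face contribution of exactly the same form. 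The cut element $I_1\cap\Omega$ plays no special role in this assembly; it behaves like any other interval, and the small cut size $\alpha h$ enters only through the position of the endpoint $x_l$. The result is $a(u_h, u_h) = \sum_F \bigl([\Phi(u_h)]_F - \widehat{f}_F [u_h]_F\bigr)$, where $F$ runs over all interior faces of $\Th$ together with the periodic boundary pair.

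At this point I will invoke the per-face E-flux (cell entropy) inequality of Jiang and Shu~\cite{jiang1994cell}: for any consistent, continuous, and monotone $\widehat{f}$ one has $[\Phi(u_h)]_F - \widehat{f}_F[u_h]_F \geq 0$, obtained by integrating the monotonicity inequality $(\widehat{f}(u^-,u^+) - f(s))(u^+ - u^-) \leq 0$ in $s$ between $u^-$ and $u^+$. Summing yields $a(u_h, u_h) \geq 0$, and therefore $\tfrac{1}{2}\tfrac{d}{dt}\!\bigl(\|u_h\|_\Omega^2 + \gamma_M J_1(u_h, u_h)\bigr) = -a(u_h, u_h) - \gamma_A J_0(u_h, u_h) \leq 0$. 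Integrating in time from $0$ to $t$ produces the stated bound.

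The hard part is really only bookkeeping in the telescoping step: one must carefully track the asymmetric cut interval $[x_l, x_{3/2}]$ and the periodic identification at the physical boundary so that every face contribution, including the boundary pair, collapses into the clean per-face form on which the Jiang--Shu estimate acts. Beyond that, the nonlinearity is absorbed entirely by that estimate, the ghost-penalty stabilization enters only through its sign, and the cut size $\alpha$ never appears in the argument---which is precisely why $L^2$ stability can be expected uniformly in $\alpha$.
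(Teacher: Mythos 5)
Your proposal is correct and follows essentially the same route as the paper: test with $v_h=u_h$, absorb the mass and $J_1$ terms into a time derivative, drop the non-negative $\gamma_A J_0(u_h,u_h)$, and reduce $a(u_h,u_h)\ge 0$ to the per-face cell entropy inequality of Jiang--Shu after telescoping with the flux antiderivative. The only cosmetic difference is that you establish the per-face inequality by integrating the E-flux condition in $s$, whereas the paper uses the mean value theorem together with monotonicity and consistency of $\widehat{f}$; these are equivalent.
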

\begin{proof}
Taking $v_h=u_h(\cdot,t)$  in the scheme \eqref{scheme:cutDG2} we get
 \begin{equation}
 (u_{h,t},u_h)_\Omega + \gamma_M J_1(u_{h,t},u_h) + a(u_h,u_h) +\gamma_A J_0(u_h,u_h)=0.
 \end{equation}
That is
\begin{align}
0=&\ino u_{h,t}u_{h} dx+\rM J_1(u_{h,t},u_h) +\rA J_0(u_h,u_h)\notag\\
&+\sum_j\left(-\inj f\left(u_{h}\right)\left(u_{h}\right)_{x} d x+\widehat{f}_{jr} u_{h,jr}^--\widehat{f}_{jl} u_{h,jl}^+
\right).
\label{eq:stable1}
\end{align}
We define $G_j=-\inj f\left(u_{h}\right)\left(u_{h}\right)_{x} d x+\widehat{f}_{jr} u_{h,jr}^--\widehat{f}_{jl} u_{h,jl}^+$ and $\tilde{F}(u)=\int^{u} f(u) du$.  Then, we have
\begin{align*}
G_j
=&\left(-\tilde{F}\left(u_{h,jr}^-\right)+\widehat{f}_{jr} u_{h,jr}^-\right)-\left(-\tilde{F}\left(u_{h,jl}^-\right)+\widehat{f}_{jl} u_{h,jl}^- \right)+\Theta_{jl}.
\end{align*}
Here,
$$
\Theta_{jl}=-\tilde{F}\left(u_{h,jl}^-\right)+\widehat{f}_{jl} u_{h,jl}^-+\tilde{F}\left(u_{h,jl}^+\right)-\widehat{f}_{jl} u_{h,jl}^+.
$$
By the mean value theorem it follows that
\begin{align}
\Theta&=-\tilde{F}\left(u_{h}^{-}\right)+\widehat{f} u_{h}^{-}+\tilde{F}\left(u_{h}^{+}\right)-\widehat{f} u_{h}^{+}=\left(u_{h}^{+}-u_{h}^{-}\right)\left(\tilde{F}^{\prime}(\xi)-\widehat{f}(u_h^-,u_h^+)\right)\\
&=(u_h^+-u_h^-)(f(\xi)-\widehat{f}(u_h^-,u_h^+)) \geq 0,\notag
\end{align}
where $\xi$ is a value between $u_h^{-}$ and $u_h^{+}$, and the monotonicity $\widehat{f}(\uparrow,\downarrow)$ as well as consistency $f(\xi)=\widehat{f}(\xi,\xi)$ of flux function $\widehat{f}$ are used.  Thus, the equation \eqref{eq:stable1} becomes
\begin{align}
0=&\ino u_{h,t}u_{h} dx+\rM J_1(u_{h,t},u_h) +\rA J_0(u_h,u_h)+\sum_jG_j\notag\\
=&\ino u_{h,t}u_{h} dx+\rM J_1(u_{h,t},u_h) +\rA J_0(u_h,u_h)+\sum_j \Theta_{jl}.
\end{align}
Here, $\sum\limits_j \left(\left(-\tilde{F}\left(u_{h,jr}^-\right)+\widehat{f}_{jr} u_{h,jr}^-\right)-\left(-\tilde{F}\left(u_{h,jl}^-\right)+\widehat{f}_{jl} u_{h,jl}^- \right)\right)=0$ follows from the periodic boundary condition.  
With $\rM\geq 0, \rA\geq 0$ in the stabilization we have
\begin{align}
\frac{1}{2}\frac{d}{dt}\left(\ino u_h^2dx+\rM J_1(u_h,u_h)\right)=\ino u_{h,t} u_{h} dx+\rM J_1(u_{h,t},u_h) \leq 0.
\end{align}
Integrating this relation in time completes the proof.
\end{proof}
Further, with $\rM>0$ Theorem 4.1 can be combined with Lemma 3.1 to yield a corresponding estimate for the coefficient vector $|U(t)|=\sqrt{U^TU}$, which is independent of the cut size $\alpha$. We point out that the semi-discrete cut DG scheme \eqref{scheme:linear:1d} without stabilization also satisfies  $L^2$ stability, and that the terms $J_1(u_t,v)$ and $J_0(u,v)$ do not destroy the stability. Comparing with standard DG method, we can not have the cell entropy inequality in each cell because  the stabilization terms combine the cut cells and their neighbours.

\subsection{\emph{a priori} error estimate}
In this subsection, we will derive an  \emph{a priori} error estimate for the  semi-discrete stabilized cut DG scheme \eqref{scheme:cutDG2} applied to  the linear advection equation \eqref{eq:adv:1d}. 

First, we give some properties of the unfitted $L^2$ projection $\p$, which are stated in \cite{grkan2018stabilized}.
The projection $\p $ satisfies the following error estimates
\begin{align}
\label{eq:pi:property0}
\left\|v-\pi_{h}^{e} v\right\|_{\mathcal{T}_{h}, s} &\lesssim h^{r-s}\|v\|_{r, \Omega}, \qquad 0 \leqslant s \leqslant r,\\
\label{eq:pi:property1}
\left\|v-\pi_{h}^{e} v\right\|_{\mathcal{F}_{h}, s} &\lesssim h^{r-s-1 / 2}\|v\|_{r, \Omega}, \quad 0 \leqslant s \leqslant r-1 / 2,\\
\label{eq:pi:property2}
\left\|v-\pi_{h}^{e} v\right\|_{\Gamma, s} &\lesssim h^{r-s-1 / 2}\|v\|_{r, \Omega}, \quad 0 \leqslant s \leqslant r-1 / 2.
\end{align}
Here, the $\lesssim-$ relation denotes $a \lesssim b \Leftrightarrow a \leq C b$ with $C$ being some constant that is independent of $h$ and $r$. Further, $||\cdot||_{m,\Omega}$ denotes the usual norm of Sobolev spaces $H^m(\Omega)$ and $||\cdot||^2_{\Th,s}=\sum_{K\in\Th}||\cdot||^2_{s,K}$.

We assume $u(\cdot,t)$ is the exact solution of problem \eqref{eq:adv:1d} and $u_h(\cdot,t)\in \V$ is the approximate solution satisfying the cut DG scheme \eqref{scheme:cutDG2}. Thus, we have
\begin{align}\label{eq:exact}
(u_t,v_h)_\Omega +\rM J_1(u_t,v_h) + a(u,v_h) + \rA J_0(u,v_h)=0, \;\forall v_h \in \V.
\end{align}
Subtracting  equation \eqref{scheme:cutDG2} from \eqref{eq:exact}  yields  for $\forall v_h \in \V$
\begin{align}\label{eq:erroreq0}
((u-u_h)_t,v_h)_\Omega + \rM J_1((u-u_{h})_t,v_h) + a(u-u_h,v_h) +  \rA J_0(u-u_h,v_h)=0.
\end{align}
Furthermore, we assume $u\in L^\infty\left((0,t);H^{r+2}(\Omega)\right)$ and $u_t\in L^\infty\left((0,t);H^{r+1}(\Omega)\right)$. Then, we have the following error estimate.
\begin{thm}
We assume $u$  is a sufficiently smooth exact solution of the linear advection equation \eqref{eq:adv:1d} with periodic boundary condition,  and $u_h(\cdot,t)\in \V$ \eqref{eq:def:uh} is the approximation  satisfying the stabilized cut DG scheme \eqref{scheme:cutDG2}. Then, we have the \textit{a priori} error estimate
\begin{align}\label{ieq:error1}
||u(\cdot,t)-u_h(\cdot,t)||_\Omega^2\leq Ch^{2r+1}.
\end{align}
Here, $C$ is a constant depending on final time $t$,  on $u$ and its derivatives, and on parameters $\rM,\rA, \omega_k$ in the stabilization terms. 
 In particular, it is independent of the size of the cut geometries.
\end{thm}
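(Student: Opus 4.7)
The plan is to mimic the classical energy argument for discontinuous Galerkin error estimates, adapted to the ghost-penalty setting. I would split the error via the unfitted $L^2$ projection $\pi_h^e$ as
\[
u - u_h = \eta - \xi, \qquad \eta := u - \pi_h^e u, \qquad \xi := \pi_h^e u - u_h \in \V,
\]
and then test the error identity \eqref{eq:erroreq0} against $v_h = \xi$. The left-hand side yields
\[
\tfrac{1}{2}\tfrac{d}{dt}\bigl(\|\xi\|_\Omega^2 + \gamma_M J_1(\xi,\xi)\bigr) + a(\xi,\xi) + \gamma_A J_0(\xi,\xi),
\]
and by the same manipulation used in Theorem 4.1 (upwind flux, consistency, monotonicity), $a(\xi,\xi)$ decomposes into a sum of non-negative jump squares at the interior faces with the periodic boundary contributions cancelling, so the whole left-hand side is bounded below by the time derivative of $\|\xi\|_\Omega^2 + \gamma_M J_1(\xi,\xi)$ up to positive stabilization and jump contributions that I would keep on the left.

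Next I would control the right-hand side term by term. The $L^2$-orthogonality of $\pi_h^e$ on $\Omega$ kills $(\eta_t,\xi)_\Omega$. For the convective term $a(\eta,\xi)$ I would integrate the volume part by parts back onto $\xi$ so that $\eta$ appears only through face values; the trace bound \eqref{eq:pi:property1} then gives an estimate of the form
\[
|a(\eta,\xi)| \lesssim h^{r+1/2}\|u\|_{r+1,\Omega}\,\bigl\|[\xi]\bigr\|_{\Fh},
\]
which is exactly the step that produces the half-order loss responsible for the exponent $2r+1$ rather than $2r+2$. The stabilization remainders $\gamma_A J_0(\eta,\xi)$ and $\gamma_M J_1(\eta_t,\xi)$ are handled by Cauchy--Schwarz in the $J_s$-seminorm, reducing everything to the estimates $J_0(\eta,\eta) + J_1(\eta_t,\eta_t) \lesssim h^{2r+1}\bigl(\|u\|_{r+1,\Omega}^2+\|u_t\|_{r+1,\Omega}^2\bigr)$ obtained by a face-wise application of \eqref{eq:pi:property1} to the jumps of $\partial^k\eta$ for $0\leq k \leq r$.

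After a weighted Young inequality that absorbs the $\xi$-factors into the coercive left-hand side, I would be left with a Gronwall-type differential inequality
\[
\tfrac{d}{dt}\bigl(\|\xi\|_\Omega^2 + \gamma_M J_1(\xi,\xi)\bigr) \leq C_1\bigl(\|\xi\|_\Omega^2 + \gamma_M J_1(\xi,\xi)\bigr) + C_2(u)\,h^{2r+1}.
\]
Integrating in time, using $\xi(\cdot,0) = O(h^{r+1/2})$ (for example with the initialization $u_h(\cdot,0) = \pi_h^e u_0$), and then applying the triangle inequality $\|u-u_h\|_\Omega \leq \|\eta\|_\Omega + \|\xi\|_\Omega$ together with $\|\eta\|_\Omega \lesssim h^{r+1}\|u\|_{r+1,\Omega}$ from \eqref{eq:pi:property0} would yield \eqref{ieq:error1}.

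The main obstacle I expect is securing the $O(h^{2r+1})$ bound on $J_0(\eta,\eta) + J_1(\eta_t,\eta_t)$ \emph{uniformly in the cut size} $\alpha$: each summand is a face-jump of $\partial^k(u-\pi_h^e u)$ weighted by $h^{2k+s}$, and combining them cleanly requires \eqref{eq:pi:property1} together with an inverse estimate to pass from high-order derivatives of $\pi_h^e u$ back to the $H^{r+1}$ norm of $u$. The second delicate point is ensuring that the face bound on $a(\eta,\xi)$ involving the small cut element does not introduce any factor depending on $\alpha$; this is precisely where the ghost-penalty weights $\omega_k$ earn their keep, since they control the values of $\xi$ and its derivatives on the small cut element by those on the neighbouring full background element.
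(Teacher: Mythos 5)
Your overall strategy coincides with the paper's: the same splitting $u-u_h=\eta-\xi$ with the unfitted projection $\pi_h^e$, testing the error equation with $\xi$, the upwind/energy identity that turns $a(\xi,\xi)$ into nonnegative jump squares on the left-hand side, trace estimates \eqref{eq:pi:property1} for the face and stabilization remainders (combined with $J_s(u,\cdot)=0$ for smooth $u$), a Gronwall inequality, and the triangle inequality; in both arguments the exponent $2r+1$ comes from the $O(h^{r+1/2})$ face bounds on $\eta$.

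Two of your intermediate steps would not survive as stated, although both are repairable by the cruder bounds the paper actually uses. First, $(\eta_t,\xi)_\Omega$ is not annihilated by orthogonality: $\pi_h^e$ is built by $L^2$-projecting an extension onto the \emph{background} elements $I_j$ (this is what keeps its stability independent of $\alpha$), so $(u-\pi_h^e u,\,v_h)_{I_j\cap\Omega}\neq 0$ on cut elements. The paper simply keeps this term and bounds it by Cauchy--Schwarz as $\tfrac12\|\eta_t\|_\Omega^2+\tfrac12\|\xi\|_\Omega^2$, which costs nothing since $\|\eta_t\|_\Omega^2=O(h^{2r+2})$. Second, integrating the volume term $(\beta\eta,\xi_x)_{I_j\cap\Omega}$ by parts does not leave only face values of $\eta$: you are left with $(\beta\eta_x,\xi)_{I_j\cap\Omega}$, and the orthogonality that would instead remove $(\eta,\xi_x)$ outright for a fitted $L^2$ projection again fails on the cut portions. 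The paper handles this term by $h^{-2}\|\eta\|_\Omega^2+h^{2}\|\xi_x\|_\Omega^2$ followed by the inverse inequality $h\|\xi_x\|_\Omega\le C\|\xi\|_\Omega$ (which on the small cut element is precisely where the ghost penalty is needed, as you correctly anticipate in your closing remarks). With these two repairs your argument reduces to the paper's proof; the remaining differences (initializing with $\pi_h^e u_0$ versus the stabilized projection $\Pi_h u_0$) are immaterial.
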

\begin{proof}
Define $\eta=u-\p u, \theta=\p u-u_h$ and we have $\theta \in\V$ which is a smooth function of $t$. Then, we can rewrite the error equation \eqref{eq:erroreq0} as
\begin{align}\label{eq:erroreq1}
&(\theta_t,v_h)_\Omega+\rM J_1(\theta_t,v_h) + a(\theta,v_h) + \rA J_0(\theta,v_h) \notag\\
&=-(\eta_t,v_h)_\Omega + \rM J_1((\p u)_t, v_h) - a(\eta,v_h) + \rA J_0(\p u,v_h), \; for \;\forall v_h \in \V.
\end{align} 
Taking the test function $v_h=\theta(\cdot,t)$ at a particular time $t$ in equation \eqref{eq:erroreq1}, we have
\begin{align*}
Lhs:&=(\theta_t,\theta)_\Omega+\rM J_1(\theta_t,\theta) + a(\theta,\theta) + \rA J_0(\theta,\theta)\notag\\
&=\frac{1}{2}\frac{d}{dt}\left(||\theta||^2_\Omega+\rM J_1(\theta,\theta)  \right)+ \rA J_0(\theta,\theta)+\frac{\beta}{2}[\theta]_l^2+\frac{\beta}{2}\sum_{j=2}^{N}[\theta]^2_{j-\frac{1}{2}},\\
Rhs:&=-(\eta_t,\theta)_\Omega + \rM J_1((\p u)_t, \theta) - a(\eta,\theta) + \rA J_0(\p u,\theta)\notag\\
&= -(\eta_t,\theta)_\Omega + \rM J_1((\p u)_t,\theta) +\rA J_0(\p u,\theta)\notag\\
&\; -\left(-\beta\widehat{\eta}_l[\theta]_l^2-\sum_{j=2}^{N}\beta\widehat{\eta}_{j-\frac{1}{2}}[\theta]_{j-\frac{1}{2}}-\sum_{j=1}^{N}(\beta\eta, \theta_x)_{I_j\cap\Omega}\right).
\end{align*}
In the above two equations, we used
$\widehat{\theta}_{N+\frac{1}{2}}=\widehat{\theta}_l$, $\widehat{\eta}_{N+\frac{1}{2}}=\widehat{\eta}_l$ and $[\theta]_l=\theta(x_l^+,t)-\theta^-_{N+\frac{1}{2}}$ based on the periodic boundary condition.
Using Cauchy-Schwarz inequality, we obtain
\begin{align*}
&Rhs\leq \frac{1}{2}\left(||\eta_t||_\Omega^2+||\theta||^2_\Omega+\rM J_1((\p u)_t, (\p u)_t) + \rM J_1(\theta,\theta) + \rA J_0(\p u, \p u)\right)\\
&+\rA J_0(\theta, \theta)+
\frac{\beta}{2}\left(\sum_{j=2}^{N}[\theta]^2_\jl+[\theta]_l^2+\sum_{j=2}^{N} \hat{\eta}_\jl^2+\widehat{\eta}^2_l+ h^{-2}||\eta||_\Omega^2+ h^{2}||\theta_x||^2_\Omega\right).\notag
\end{align*}
Using the inverse inequality  $h||\theta_x||_\Omega\leq C||\theta||_\Omega$ and combining the above equations in this subsection, we can get
\begin{align}\label{ieq:0}
&\frac{d}{dt}\left(||\theta||^2_\Omega+\rM J_1(\theta,\theta)  \right)\notag\\
&\leq ||\eta_t||_\Omega^2+||\theta||^2_\Omega+\rM J_1((\p u)_t, (\p u)_t) + \rM J_1(\theta,\theta) + \rA J_0(\p u, \p u)\notag\\
&\; 
+\beta\sum_{j=2}^{N} \hat{\eta}_\jl^2+\beta \widehat{\eta}^2_l
+h^{-2}\beta||\eta||_\Omega^2+h^2\beta||\theta_x||^2_\Omega\notag\\
&\; \leq C_1(||\theta||_\Omega^2+\rM J_1(\theta,\theta)) + I+II+III+IV+V,
\end{align}
where
\begin{align*}
I &= ||\eta_t||_\Omega^2,\; II= \beta h^{-2}||\eta||_\Omega^2,III=\rM J_1((\p u)_t, (\p u)_t),\notag\\
IV&=\rA J_0(\p u, \p u), V={\beta}\sum_{j=2}^{N} \hat{\eta}_\jl^2+{\beta}\hat{\eta}^2_l.
\end{align*}
Here, $\eta_t=(u-\p u)_t=u_t-\p u_t$. Thus, based on the properties of $\p $ in \eqref{eq:pi:property0} and assuming $u\in L^\infty\left((0,t);H^{r+2}(\Omega)\right), u_t\in L^\infty\left((0,t);H^{r+1}(\Omega)\right)$, we have
\begin{align}
\label{ieq:I}
I&=||\eta_t||_\Omega^2\leq||\eta_t||_{\Th}^2\leq Ch^{2r+2}||u_t||_{H^{r+1}(\Omega)}^2,\\
\label{ieq:II}
II&=h^{-2}\beta||\eta||_\Omega^2\leq h^{-2}\beta||\eta||_{\Th}^2\leq Ch^{2r+2}||u||_{H^{r+2}(\Omega)}^2.
\end{align}
For the stabilization terms $III,IV$, we use the trace property \eqref{eq:pi:property1} and $J_s(u,v_h)=0$ with u sufficiently smooth. Then, we can obtain
\begin{align*}
\frac{1}{\rA}IV&= J_0(\p u,\p u)=J_0(\p u -u, \p u -u)= \sum_{F\in\Fr}\sum_{k=0}^{r}\omega_kh^{2k}[\partial^k (u-\p u)]^2_F\notag\\
&\leq 2\sum_{F\in\Fr}\sum_{k=0}^{r}\omega_k h^{2k}||\partial^k(u-\p u)||_F^2\notag\\
&\leq C'\sum_{F\in\Fr}\sum_{k=0}^{r}\omega_k h^{2k}h^{2(r+1)-2k-1}||u||_{H^{r+1}({\Omega})}^2\leq Ch^{2r+1}||u||_{H^{r+1}({\Omega})}^2.
\end{align*}
Similar to the analysis of the above inequality, we also have
\begin{align}\label{ieq:III}
\frac{1}{\rM}III=J_1((\p u)_t,(\p u)_t)\leq Ch^{2r+2}||u_t||_{H^{r+1}({\Omega})}^2.
\end{align}
Using the trace inequality \eqref{eq:pi:property1} and upwind flux $\hat{\eta}=\eta^{-}$, we have
\begin{align}\label{ieq:V}
V=\beta\sum_{j=2}^{N} \hat{\eta}_\jl^2+\beta\hat{\eta}^2_l\leq Ch^{2r+1}||u||_{H^{r+1}(\Omega)}^2.
\end{align}
Combing the above inequalities \eqref{ieq:0}-\eqref{ieq:V}, we can get
\begin{align}\label{ieq:error}
\frac{d}{dt}\left(||\theta||^2_\Omega+\rM J_1(\theta,\theta)  \right)\leq C_1(||\theta||_\Omega^2+\rM J_1(\theta,\theta))+C_2h^{2r+1}.
\end{align}
Thus, 
\begin{align}\label{ieq:error2}
\frac{d}{dt}e^{-C_1t}\left(||\theta||^2_\Omega+\rM J_1(\theta,\theta)  \right) \leq C_2e^{-C_1t}h^{2r+1},
\end{align}
which can be integrated in time to yield 
\begin{align}\label{ieq:error3}
||\theta||^2_\Omega+\rM J_1(\theta,\theta) \leq e^{C_1t}
\left(||\theta||^2_\Omega+\rM J_1(\theta,\theta) \right)|_{t=0}
+C_2\int_0^te^{C_1(t-\tau)}d\tau h^{2r+1}.
\end{align}
We note that the initialization of $u_h$ is based on the stabilized  $L^2$ projection $\Pi_h u_0$ \cite{burman2015stabilized} of the smooth initial data $u_0(x)$. Thus, combing the property of projection $\p$ with the error estimation of stabilized $L^2$ projection $\Pi_h$,  we have
\begin{align}\label{eq:error:initial}
||\theta||^2_\Omega|_{t=0}\leq||\p u_0-u_0||^2_\Omega+||u_0-\Pi_h u_0||^2_\Omega \leq Ch^{2r+2},\\
 J_1(\theta,\theta) |_{t=0}\leq 2 \left(J_1(\p u_0-u_0,\p u_0-u_0)+J_1(u_0-\Pi_h u_0, u_0-\Pi_h u_0)\right) \leq Ch^{2r+2}.\notag
\end{align}
Then, with initial error \eqref{eq:error:initial} and \eqref{ieq:error3}, we obtain
$$||\theta||^2_\Omega+\rM J_1(\theta,\theta) \leq Ch^{2r+1}.$$ 
Finally, applying  the triangle inequality and the property of projection $\p$, we have the  \textit{a priori} error estimate \eqref{ieq:error1}.
\end{proof}
\begin{rem}
Our estimate based on the projection $\pi_h^e$ yields lower  accuracy than for the standard method. In Section \ref{sec:Numerical} optimal accuracy is observed numerically.\end{rem}

\subsection{TVD stability}\label{sec:tvd}
In this subsection, we will prove that the stabilized  cut DG scheme \eqref{scheme:cutDG2} with piecewise constants in space for the linear advection equation \eqref{eq:adv:1d} with periodic boundary condition is TVD stable when the explicit Euler time discretization is applied. Let $u_j^n$ denote the solution $u_h(x,t_n)$ in the element $I_j\cap\Omega$. 
A DG scheme is TVD stable if it satisfies
\begin{equation}
TV\left({u_h}^{n+1}\right) \leq TV\left({u_h}^{n}\right), \quad \text { with } T V\left({u_h}^{n}\right)=\sum_{j}\left|{u}_{j+1}^{n}-{u}_{j}^{n}\right|.
\end{equation}

With the setting in Figure \ref{mesh_discretization}, we have the cut DG scheme \eqref{scheme:cutDG2} with piecewise constants in space and explicit Euler time discretization as 
\begin{align}
\label{eq:tvd:1}
\alpha  u_{1}^{n+1}+\rM  u_{1}^{n+1}-\rM  u_{2}^{n+1}=&\alpha  u_{1}^{n}-\rM  u_{2}^{n}+\rM  u_{1}^{n} \\
& +\lambda \left(u_N^{n}-u_{1}^{n}\right)+\lambda \gamma_{A}\left(u_{2}^{n}-u_{1}^{n}\right),\notag\\
\label{eq:tvd:2}
 u_{2}^{n+1}+\rM  u_{2}^{n+1}-\rM  u_{1}^{n+1}=&  u_{2}^{n}-\rM  u_{1}^{n}+\rM  u_{2}^{n} \\
& +\lambda \left(u_{1}^{n}-u_{2}^{n}\right)-\lambda \gamma_{A}\left(u_{2}^{n}-u_{1}^{n}\right),\notag\\
 u_{j}^{n+1}=& u_{j}^{n}+\lambda\left(u_{j-1}^{n}-u_{j}^{n}\right),\quad j=3,\cdots,N.
\label{eq:tvd:3}
\end{align}
Here, $\lambda={\Delta t}/{h}$ is Courant number. For this scheme, we have the following theorem.
\begin{thm}
The stabilized cut DG scheme \eqref{scheme:cutDG2} 
with piecewise constants in space and the explicit Euler time discretization for linear advection equation with $\beta=1$ is TVD stable under the condition that the time step satisfies $\frac{\Delta t}{h}\leq\alpha+\frac{\rM}{\rM+1}$, and parameters satisfy $0<\rM\leq\rA $ and $(1+\alpha)\rA -\rM\leq 1-\alpha$.
\end{thm}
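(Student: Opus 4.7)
The plan is to eliminate the $2\times 2$ coupling between $u_1^{n+1}$ and $u_2^{n+1}$ in \eqref{eq:tvd:1}--\eqref{eq:tvd:2}, reduce the full scheme to an explicit update, and then run a Harten-type incremental argument on consecutive differences. First I would solve for $u_1^{n+1},u_2^{n+1}$ by inverting the local coupling matrix $\bigl(\begin{smallmatrix}\alpha+\rM & -\rM\\ -\rM & 1+\rM\end{smallmatrix}\bigr)$, whose determinant is $D=\alpha(1+\rM)+\rM$. The CFL assumption rewrites as $\lambda(1+\rM)\le D$, which is exactly what will control the denominators appearing after inversion. Combined with the plain upwind update $u_j^{n+1}=(1-\lambda)u_j^n+\lambda u_{j-1}^n$ for $j\ge 3$ from \eqref{eq:tvd:3}, this produces explicit affine formulas for every $u_j^{n+1}$ in terms of $u_1^n,u_2^n,u_N^n,u_{j-1}^n$.

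Next I would express each consecutive difference $\Delta_j^{n+1}:=u_{j+1}^{n+1}-u_j^{n+1}$, with the periodic convention $\Delta_N^{n+1}:=u_1^{n+1}-u_N^{n+1}$, as a linear combination $\sum_k c_{jk}\Delta_k^n$. For $j=3,\dots,N-1$ this is immediate and gives $(1-\lambda)\Delta_j^n+\lambda\Delta_{j-1}^n$. The three ``boundary'' differences $\Delta_1^{n+1},\Delta_2^{n+1},\Delta_N^{n+1}$ must be assembled directly; each couples only to $\Delta_1^n,\Delta_N^n$ (and additionally $\Delta_{N-1}^n$ in the case of $\Delta_N^{n+1}$), with coefficients rational in $\alpha,\rM,\rA,D$. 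A consistency identity to be checked by matching coefficients of each $u_k^n$ on both sides is that the column sums $\sum_j c_{jk}$ all equal exactly $1$, independently of parameters. Once the incremental decomposition and column-sum identity are in hand, the TVD conclusion is clean:
\begin{equation*}
\sum_j|\Delta_j^{n+1}|\le \sum_j\sum_k c_{jk}|\Delta_k^n|=\sum_k\Bigl(\sum_j c_{jk}\Bigr)|\Delta_k^n|=\sum_k|\Delta_k^n|,
\end{equation*}
provided every $c_{jk}\ge 0$.

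The real content is therefore the non-negativity of the $c_{jk}$ under the hypotheses. I expect the coefficients demanding attention to be $1-\lambda(1+\rM)/D$, $1-\lambda[\alpha+(1+\alpha)\rA]/D$, $\lambda(\rA-\rM)/D$, and the regular upwind factor $1-\lambda$. The first is handled by the CFL $\lambda\le D/(1+\rM)=\alpha+\rM/(1+\rM)$; the third is immediate from $\rM\le\rA$; and the second follows once one observes that $(1+\alpha)\rA-\rM\le 1-\alpha$ rewrites as $\alpha+(1+\alpha)\rA\le 1+\rM$, so that $\lambda[\alpha+(1+\alpha)\rA]\le\lambda(1+\rM)\le D$.

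The subtlest point, which I expect to be the main obstacle, is the interplay of the three hypotheses that simultaneously enforces \emph{all} positivities. Specifically, $(1+\alpha)\rA-\rM\le 1-\alpha$ together with $\rM\le\rA$ implicitly forces $\alpha(1+\rM)\le 1$, which in turn gives $D/(1+\rM)\le 1$ and hence $\lambda\le 1$; this ``hidden'' bound is what makes the standard upwind coefficient $1-\lambda$ non-negative and glues the cut-region coefficients to the regular-region ones. Beyond recognizing this coupling between the hypotheses, the remainder of the proof is careful but routine bookkeeping of the incremental decomposition near the cut cell.
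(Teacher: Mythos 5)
Your proposal is correct and follows essentially the same route as the paper: the paper eliminates the $2\times 2$ coupling by taking the combination $\alpha\cdot\eqref{eq:tvd:2}-\eqref{eq:tvd:1}$ (equivalent to your matrix inversion, with the same determinant $D=\alpha(1+\rM)+\rM$ appearing as the denominator $\alpha+\alpha\rM+\rM$), writes each consecutive difference in Harten incremental form, and verifies exactly the non-negativity conditions you list, including the implicit $\lambda\le 1$ that your two parameter hypotheses force via $\alpha(1+\rM)\le 1$.
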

\begin{proof}
With the periodic boundary condition we define $u_0=u_N$ and  divide
 \begin{align*}
 \sum_{j=1}^N\left|u_{j}-u_{j-1}\right|=& {\sum_{j= 3}^N\left|u_{j+1}-u_{j}\right|}+{\left|u_{2}-u_{1}\right|}
 +{\left|u_{3}-u_{2}\right|}
 +{\left|u_{1}-u_N\right|}.
 \end{align*}
We first consider $u_{2}^{n+1}-u_{1}^{n+1}$ by subtracting equation \eqref{eq:tvd:1} from equation \eqref{eq:tvd:2} 
 multiplied by $\alpha$, 
 \begin{equation}
u_{2}^{n+1}-u_{1}^{n+1}=\left( 1- \lambda \frac{\alpha+\alpha  \rA+  \rA}{\alpha+\alpha \rM+\rM}\right)\left(u_{2}^{n}-u_{1}^{n}\right)+\frac{\lambda}{\alpha+\alpha \rM+\rM}\left(u_{1}^{n}-u_{N}^{n}\right).
\label{eq:tvd:5}
\end{equation}
 Next, we compute $u_{3}^{n+1}-u_{2}^{n+1}$ by subtracting equation \eqref{eq:tvd:2} from equation \eqref{eq:tvd:3} with $j=3$ and get 
 \begin{equation}
u_{3}^{n+1}-u_{2}^{n+1}=\rM\left(u_{2}^{n+1}-u_{1}^{n+1}\right)+(1-\lambda)\left(u_{3}^{n}-u_{2}^{n}\right)+\left(\lambda+\lambda \rA-\rM\right)\left(u_{2}^{n}-u_{1}^{n}\right).
\label{eq:tvd:6}
\end{equation}
Replacing $u_{2}^{n+1}-u_{1}^{n+1}$ in equation \eqref{eq:tvd:6} by equation \eqref{eq:tvd:5}, we have 
\begin{align}
u_{3}^{n+1}-u_{2}^{n+1}=&\lambda \frac{\alpha+ \rM+\alpha \rA}{\alpha +\rM+\alpha\rM}\left(u_{2}^{n}-u_{1}^{n}\right)+(1-\lambda)\left(u_{3}^{n}-u_{2}^{n}\right)\notag\\
&+\frac{\lambda\rM}{\alpha+\alpha\rM+\rM}\left(u_{1}^{n}-u_{N}^{n}\right).
\label{eq:tvd:7}
\end{align}
Then, we consider $u_1^{n+1}-u_N^{n+1}$. By subtracting $\alpha$ times equation \eqref{eq:tvd:3} with $j=N$ from equation \eqref{eq:tvd:1}, and replacing 
$u_{2}^{n+1}-u_{1}^{n+1}$  by equation \eqref{eq:tvd:5}, we obtain
\begin{align}\label{eq:tvd:8}
u_1^{n+1}-u_N^{n+1}=& \frac{ \lambda\left(  \rA-\rM\right)}{\alpha+\alpha \rM+\rM}\left(u_{2}^{n}-u_{1}^{n}\right)+\left(1-\lambda \frac{\rM+1}{\alpha+\alpha \rM+\rM}\right)\left(u_{1}^{n}-u_N^{n}\right)\notag\\
&-\lambda (u_{N-1}^n-u_N^n).
\end{align}
 For the standard part with $4\leq j\leq N$, we have 
 \begin{align}\label{eq:tvd:9}
 u_{j}^{n+1}-u_{j-1}^{n+1}=(1-\lambda)(u_j^n-u_{j-1}^n)+\lambda(u_{j-1}^n-u_{j-2}^{n}).
 \end{align}
 Summing the absolute of equations \eqref{eq:tvd:5}, \eqref{eq:tvd:7}, \eqref{eq:tvd:8} and \eqref{eq:tvd:9}, we get
 \begin{align*}
 \sum_{j=1}^N&\left|u_{j}^{n+1}-u_{j-1}^{n+1}\right|\leq
\left(\left|1- \frac{\lambda(\rM+1)}{\alpha+\alpha \rM+\rM}\right|+\frac{\lambda(\rM+1)}{\alpha +\rM+\alpha\rM}\right)\left|u_{1}^{n}-u_N^{n}\right|\\
 &+\left(\left|1- \lambda \frac{\alpha+\alpha  \rA+  \rA}{\alpha+\alpha \rM+\rM}\right|+\left|\frac{ \lambda( \rA-\rM)}{\alpha+\alpha \rM+\rM}\right|+\lambda \frac{\alpha+ \rM+\alpha \rA}{\alpha +\rM+\alpha\rM}\right)\left|u_{2}^{n}-u_{1}^{n}\right|\\
 &+\sum_{j=3}^{N}|u_j^n-u_{j-1}^n|.
 \end{align*}
 In the above inequality, the parameters $\rM\ge 0,\rA\ge 0$ and $0<\lambda\le 1$ are used.  With $\rM\leq \rA$ and $(1+\alpha) \rA-\rM\leq 1-\alpha$, we have $\lambda\frac{\alpha+\alpha \rA+ \rA}{\alpha+\alpha\rM+\rM}\leq \lambda\frac{\rM+1}{\alpha+\alpha\rM+\rM}\leq 1$ under the time step satisfying $\lambda=\frac{\Delta t}{h}\leq\alpha+\frac{\rM}{\rM+1}$. Thus,  we have
\begin{align}
1- \frac{\lambda(\rM+1)}{\alpha+\alpha \rM+\rM}\ge 0, 1- \lambda \frac{\alpha+\alpha  \rA+  \rA}{\alpha+\alpha \rM+\rM}\ge 0,  \rA-\rM\ge 0,
\end{align}
 and we can get
\begin{align*}
\sum_{j=1}^N\left|u_{j}^{n+1}-u_{j-1}^{n+1}\right|\leq\sum_{j=3}^{N}|u_j^n-u_{j-1}^n|+|u_1^n-u_N^n|+|u_2^n-u_1^n|=\sum_{j=1}^N|u_j^n-u_{j-1}^n|.
\end{align*}
\end{proof}

As for the standard  DG scheme, we  can not get the TVD stability for higher order polynomials. Similar to the standard DG method, we use the TVB \textit{minmod} limiter in \cite{ShuDG2} to control the oscillations and overshoot produced from the stabilized cut DG scheme \eqref{scheme:cutDG2}. We define  the cell average of the solution $u$ as
$$
\overline{u}_{j}=\frac{1}{|I_j\cap \Omega|} \int_{I_{j}\cap \Omega} u dx,
$$
and further define
$$
\tilde{u}_{j}=u_{j+\frac{1}{2}}^{-}-\overline{u}_{j}, \quad \widetilde{\widetilde{u}}_{j}=\overline{u}_{j}-u_{j-\frac{1}{2}}^{+},
\Delta_{+} \overline{u}_{j}=\overline{u}_{j+1}-\overline{u}_{j}, \quad \Delta_{-} \overline{u}_{j}=\overline{u}_{j}-\overline{u}_{j-1}.
$$
We modify  both $\tilde{u}_{j}$ and $\tilde{\tilde{u}}_{j}$  by the TVB \textit{minmod} limiter $\tilde{m}$, 
\begin{equation}
\tilde{u}_{j}^{(\mathrm{mod})}=\tilde{m}\left(\tilde{u}_{j}, \Delta_{+} \overline{u}_j, \Delta_{-} \overline{u}_j\right), \quad \widetilde{\widetilde{u}}_{j}^{(\mathrm{mod})}=\tilde{m}\left(\widetilde{\widetilde{u}}_{j}, \Delta_{+} \overline{u}_j, \Delta_{-} \overline{u}_j\right).
\end{equation}
For the definitions of function $\tilde{m}$  and details of the TVB limiter, we refer to \cite{ShuDG2}. 
Then we recover the limited function $u_{h}^{(\mathrm{mod})}$ by maintaining the old cell average $\bar{u}_j$ and the new point values  given by $
u_{h}^{(mod)}\left(x_{j+\frac{1}{2}}^{-}\right)=\bar{u}_{j}+\tilde{u}_{j}^{(m o d)}, \quad u_{h}^{(mod)}\left(x_{j-\frac{1}{2}}^{+}\right)=\bar{u}_{j}-\widetilde{\widetilde{u}}_{j}^{(mod)}.$ 
This recovery is unique for $P^{k}$ polynomials with $k \leq 2$. When $k>2$, the recovery is done by setting high order coefficients than $2$ to zero.

Note that we can not prove the TVDM property of the stabilized cut DG scheme with high order polynomials. The reason is  the jump of high order derivatives in the stabilization $J_s(u_h,v)$. However, we observe numerically that the stabilized cut DG scheme is TVB  when the limiter is applied. 
On coarse meshes with one small cut element and meshes with more small cut elements, some oscillations are triggered when a discontinuity passes a cut element and its neighbours. In appendix B, we describe a modified limiting procedure, where the approximation is locally reduced to the $P^0$ scheme in a cut element and its neighbours when the standard limiter indicates that limiting is needed in a cut element or its neighbour. 

\section{Numerical examples}
\label{sec:Numerical}

In this section, we present numerical examples that demonstrate the performance  of our proposed stabilized cut DG scheme \eqref{scheme:cutDG2} for scalar problems \eqref{eq:linear:1d}. Based on the numerical studies for eigenvalues of the spatial operator in Section 3 and the analysis in Section 4
we include both stabilization terms  $J_1(u_t,v)$ and $J_0(u,v)$ with coefficients $\gamma_M=0.25$, $\gamma_A=0.75, \omega_k=\frac{1}{(2k+1)k!^2}$.
Both linear and nonlinear cases are considered. 
In all our computations, the third order  TVD RK method \cite{Time_shu} is used when $r\leq 2$, and the fourth order five stages RK method is used when $r=3$  for the time discretization. 
We use a time step $\Delta t=\lambda h$, where the Courant number $\lambda$ varies with polynomial order. We consider the case with a cut element at the boundary, see Figure \ref{mesh_discretization}, and cases with one or more cut elements in the interior, see Figure \ref{fig:cutmiddle1}. In the problems with non-smooth solutions, limiting is used to control oscillations.

\subsection{Accuracy test for the linear case}
In this subsection, we consider the one dimensional linear advection equation and periodic boundary condition,  
\begin{align}\label{eq:test:linear}
&u_t+u_x=0, \, 0<x<2, \, t>0,
\end{align}
with the initial data $u(x,0)=1.0+0.5\sin(\pi x)$. The exact solution is $u(x,t)=1+0.5\sin(\pi(x-t))$. We test the problem \eqref{eq:test:linear} on the mesh setting in Figure \ref{mesh_discretization} with one cut element on the left boundary.   The computational domain $[x_L,x_R]=[x_{l}-(1-\alpha) h, x_{r}]$ with $\alpha\in (0,1]$  and $h$ the regular size of interior elements.  In our test, $\Delta t=\lambda\frac{x_{r}-x_{l}}{N+\alpha-1}$ with $N=40,80,160,320,640$ and $\lambda=0.5,0.3,0.2,0.14$ for $r=0,1,2,3$ respectively.
The stabilized cut DG scheme \eqref{scheme:cutDG2} with upwind flux is applied up to time $t=1$. In Figure \ref{fig:table:linears2}, we plot $L^2$ and $L^\infty$ errors 
of the numerical solutions on the mesh with a cut element of size $\alpha=10^{-4}$ and the corresponding average convergence rates are shown in the legend.  Note that the stabilized cut DG schemes are stable and converge optimally with the same Courant number $\lambda$ as the standard DG scheme.  We have also tested this problem on the mesh in 
Figure \ref{fig:cutmiddle1}, and with $\alpha$'s as small as $10^{-10}$. In all cases the results are very similar.

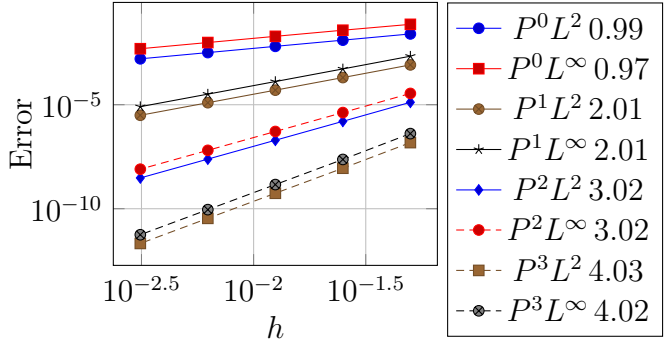
\begin{figure}[tbhp]
  \centering
   \begin{tikzpicture}
    \begin{loglogaxis}
    [height=2.0in, grid=major,
      xlabel={$h$}, ylabel={Error},
      legend entries={$P^0 L^2 \,0.99$,$P^0 L^\infty \,0.97$,$P^1 L^2 \,2.01$,$P^1 L^\infty\,2.01$,$P^2 L^2 \,3.02$,$P^2 L^\infty \,3.02$,$P^3 L^2\,4.03$,$P^3 L^\infty\,4.02$
    }, 
      legend pos=outer north east
       ]
\addplot table [x=h, y=P0_L2] {Linear_Minus4_error_boundarycut.dat};
\addplot table [x=h, y=P0_Linf] {Linear_Minus4_error_boundarycut.dat};
\addplot table [x=h, y=P1_L2] {Linear_Minus4_error_boundarycut.dat};
\addplot table [x=h, y=P1_Linf] {Linear_Minus4_error_boundarycut.dat};
\addplot table [x=h, y=P2_L2] {Linear_Minus4_error_boundarycut.dat};
\addplot table [x=h, y=P2_Linf] {Linear_Minus4_error_boundarycut.dat};
\addplot table [x=h, y=P3_L2] {Linear_Minus4_error_boundarycut.dat};
\addplot table [x=h, y=P3_Linf] {Linear_Minus4_error_boundarycut.dat};
 \end{loglogaxis}
 \end{tikzpicture}
  \caption{$L^2$  and $L^{\infty}$  errors  for the stabilized scheme \eqref{scheme:cutDG2} of different polynomial orders 
  for the advection problem \eqref{eq:test:linear} at $t=1$. The average convergence rate is given in the legend.
} \label{fig:table:linears2}
\end{figure}

\subsection{The linear case with non-smooth data}
In this subsection, we consider the linear advection problem $u_t+u_x=0$ with non-smooth initial data and a non-smooth boundary condition.  
We have tested many more $\alpha$ values than those presented, and the solutions behave similarly for all $\alpha$ values.

\subsubsection{Non-smooth initial data}
Here, we test the advection problem \eqref{eq:test:linear} with periodic boundary condition on  domain $\Omega=[0,1]$ and non-smooth initial data
\begin{equation}\label{eq:nonsmoothinitial}
u(x,0)=
\left\{\begin{array}{ll}
{1} & {0.1<x<0.5}, \\
{0} & {\text{otherwise.}}
\end{array}\right.
\end{equation}
 We use a mesh partition with equal size $h=(x_r-x_l)/N$, with the middle element $[0.5,0.5+h]$ split into two cut elements $[0.5,0.5+\alpha h],[0.5+\alpha h, 0.5+h]$ with $\alpha=10^{-4}$,  as in Figure \ref{fig:cutmiddle1}. 
We first solve this problem using the cut DG scheme \eqref{scheme:cutDG2} with $P^0$ approximation and upwind flux,  and the Courant number $\lambda=0.2$.  The stabilization is added only on the  interior interface point $x=0.5$ since $\alpha<0.5$. In Figure \ref{fig:nonsmoothinitial:p1:overshoot} (a), we show the solution based on $P^0$ cut DG scheme.  We do not observe any overshoots from our stabilized cut DG scheme. This is expected by the TVD stability. 

To show the performance of the high order cut DG scheme, we also test this problem with $P^1$ polynomial space and $\alpha=10^{-4}$, $\lambda=0.3$.  
In Figure \ref{fig:nonsmoothinitial:p1:overshoot} (b)-(d), we show the numerical solution $u_h$ and the total variation of the mean values  $TV(\bar{u}_h)$  of $u_h$  for different element sizes.  We can see that on the coarsest mesh the numerical solution has an overshoot near the cut element and that $TV(\bar{u}_h)$ decreases after the discontinuity passes through the cut element.  On the finer meshes we have not seen any overshoots.  In the Figure \ref{fig:nonsmoothinitial:p1:overshoot} (d), we also study the performance of the stabilized DG scheme in a long time simulation. Note that there are many small  increases in total variation $TV(\bar{u}_h)$ (red line) when the standard limiting is applied, each such increase decays rapidly with time and $TV(\bar{u}_h)$ remains  bounded. We believe the overshoots may be caused by the higher derivative terms in the stabilization term $J_s(u,v)$. 
With the modified  limiting descried in appendix B,  the results improve, see Figure \ref{fig:nonsmoothinitial:p1:overshoot} (c). There are no overshoots even on the coarsest mesh, and $TV(\bar{u}_h)$ (green line in (d)) is diminishing monotonically. Comparing with the standard DG method (blue line in (d)),  we note that our proposed cut DG methods are more dissipative. We note that \textit{limiter} and \textit{modified}  in the legend of figures mean the standard limiting and modified limiting is applied to the scheme, respectively.
\begin{figure}[tbhp]
  \centering
\subfigure[$P^0$ ]{
\includegraphics[width=2.2in]{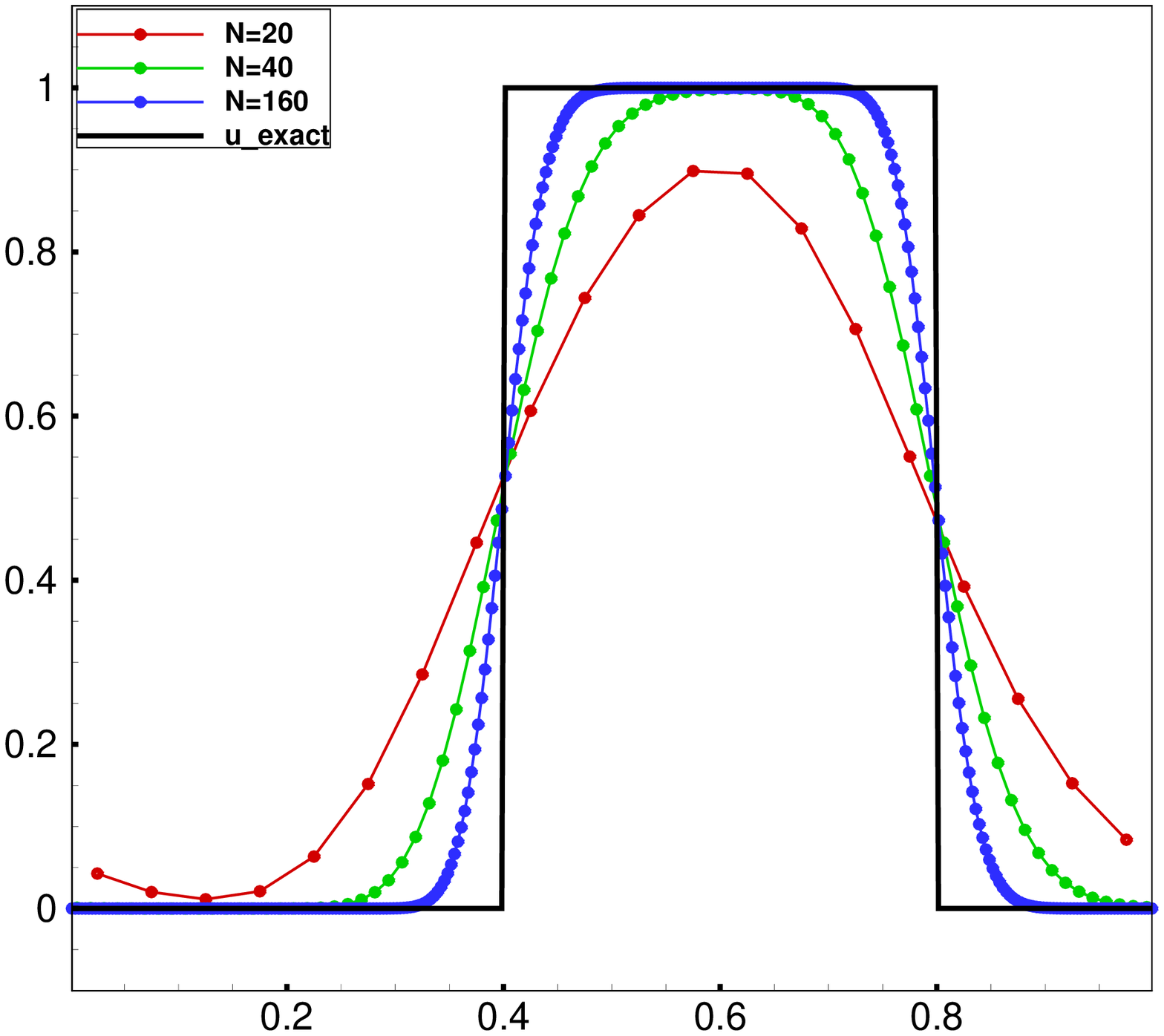}
}
\subfigure[$P^1$ with TVB \textit{limiter}]{
\includegraphics[width=2.2in]{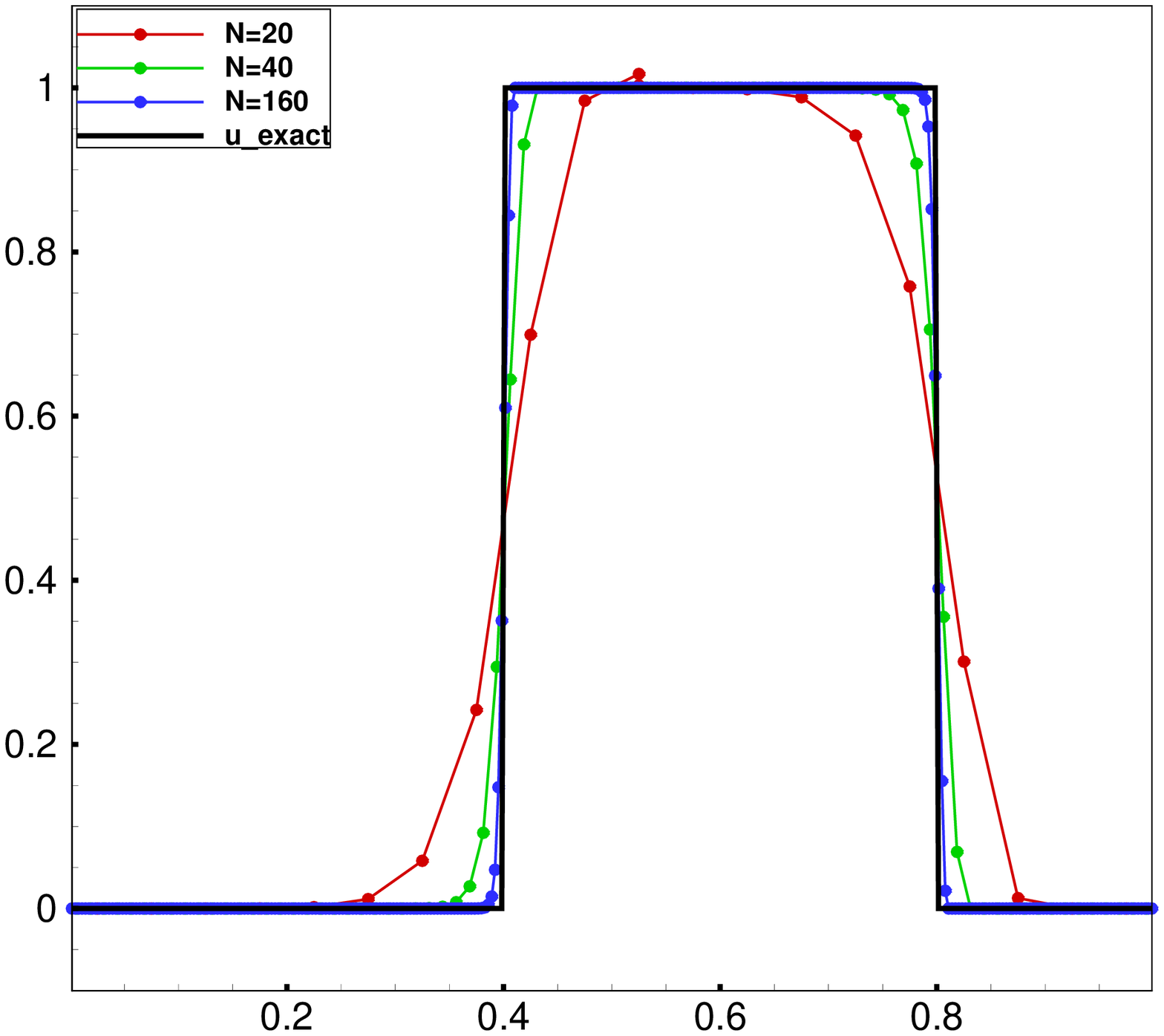}
}
\subfigure[$P^1$ with \textit{modified} limiting]{
\includegraphics[width=2.2in]{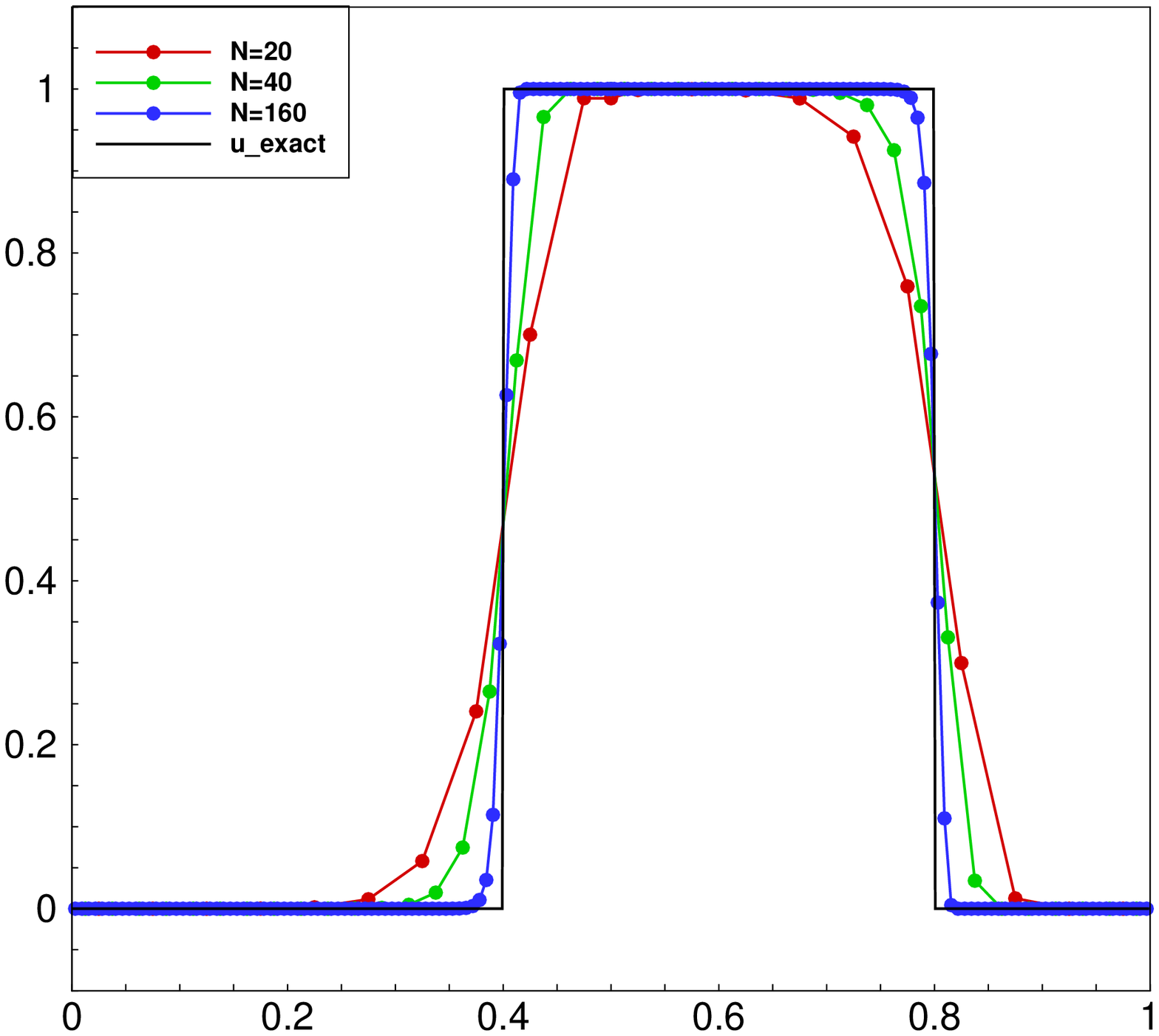}
}
\subfigure[$TV(\overline{u})$ up to $t=100$ with $N=20$]{
\includegraphics[width=2.2in]{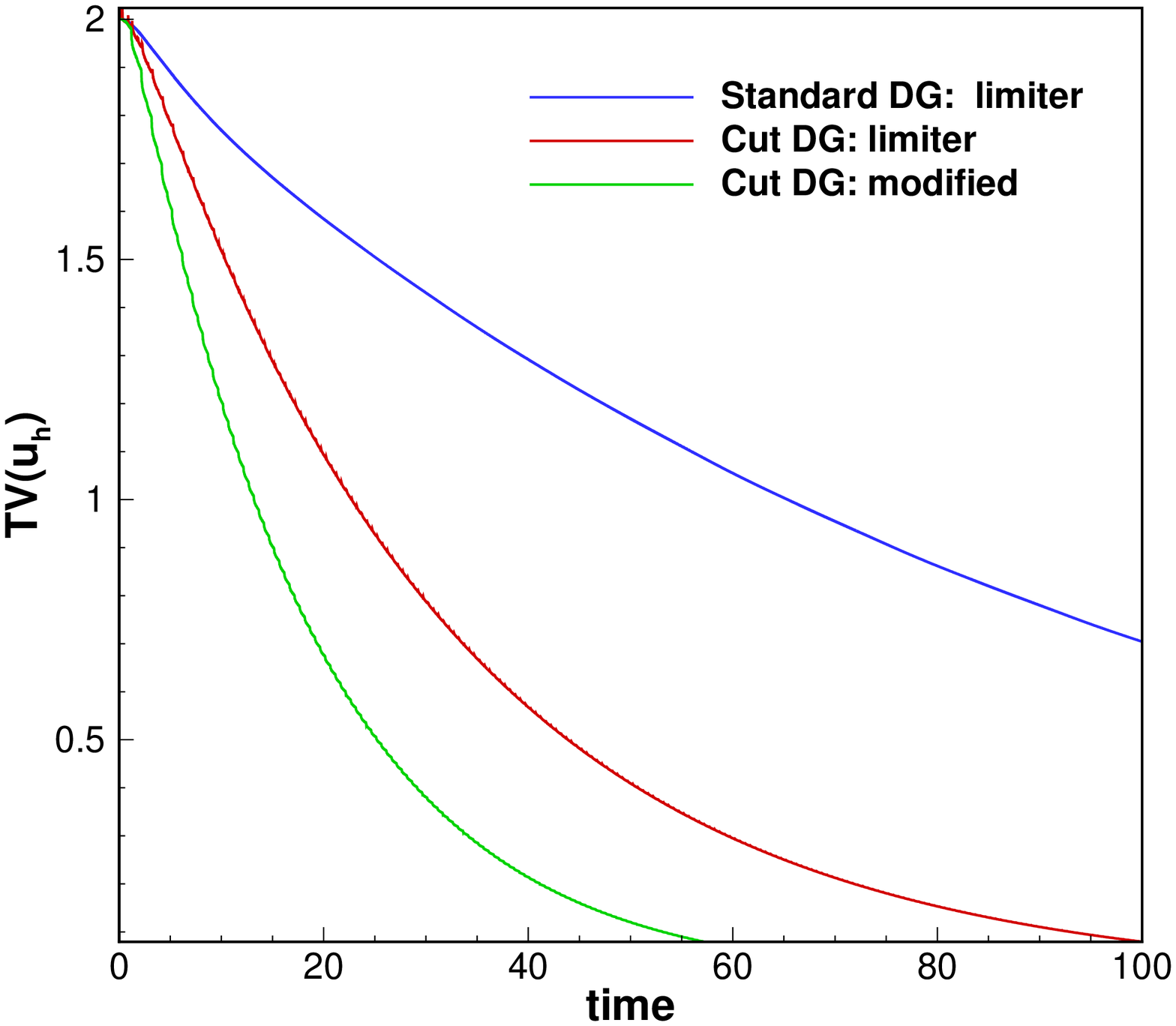}
}
\caption{ Numerical solutions for the advection equation  with non-smooth initial data at $t=0.3$ (a-c),  and $TV(\overline{u_h})$ as function of t (d). Results are for one small cut element with $\alpha=10^{-4}$.  
 }\label{fig:nonsmoothinitial:p1:overshoot}
\end{figure}

\subsubsection{Non-smooth boundary data}
Next, similar to computations in \cite{tan2010inverse}, we test the advection equation \eqref{eq:test:linear} on the physical domain $[x_l,x_r]=[0,2]$ with the left boundary condition being
\begin{equation}\label{exe:eq:nonsmoothBC}
g(t)=\left\{\begin{array}{ll}{0,} & {t \leqslant 1,} \\ {-1,} & {t>1.}\end{array}\right.
\end{equation}
We solve this example by the stabilized cut DG  scheme \eqref{scheme:cutDG2} with  $P^2$ polynomials and upwind flux. Here the time step is $\Delta t=0.2h$ with $h=(x_r-x_l)/(N+1-\alpha)$.  We use the mesh setting in Figure \ref{mesh_discretization}, with a cut element with $\alpha=10^{-2}$ located on the left boundary. From the results in Figure \ref{exe:nonsmoothBC}, we can observe that the cut DG scheme with sufficient mesh refinement can simulate this problem well and capture the discontinuity. However, also for this case, we observe undershoots on coarse meshes. These decay with time after the discontinuity passes the cut element. The phenomena is not seen on fine meshes.

\begin{figure}[tbhp]
  \centering
 \includegraphics[width=2.2in]{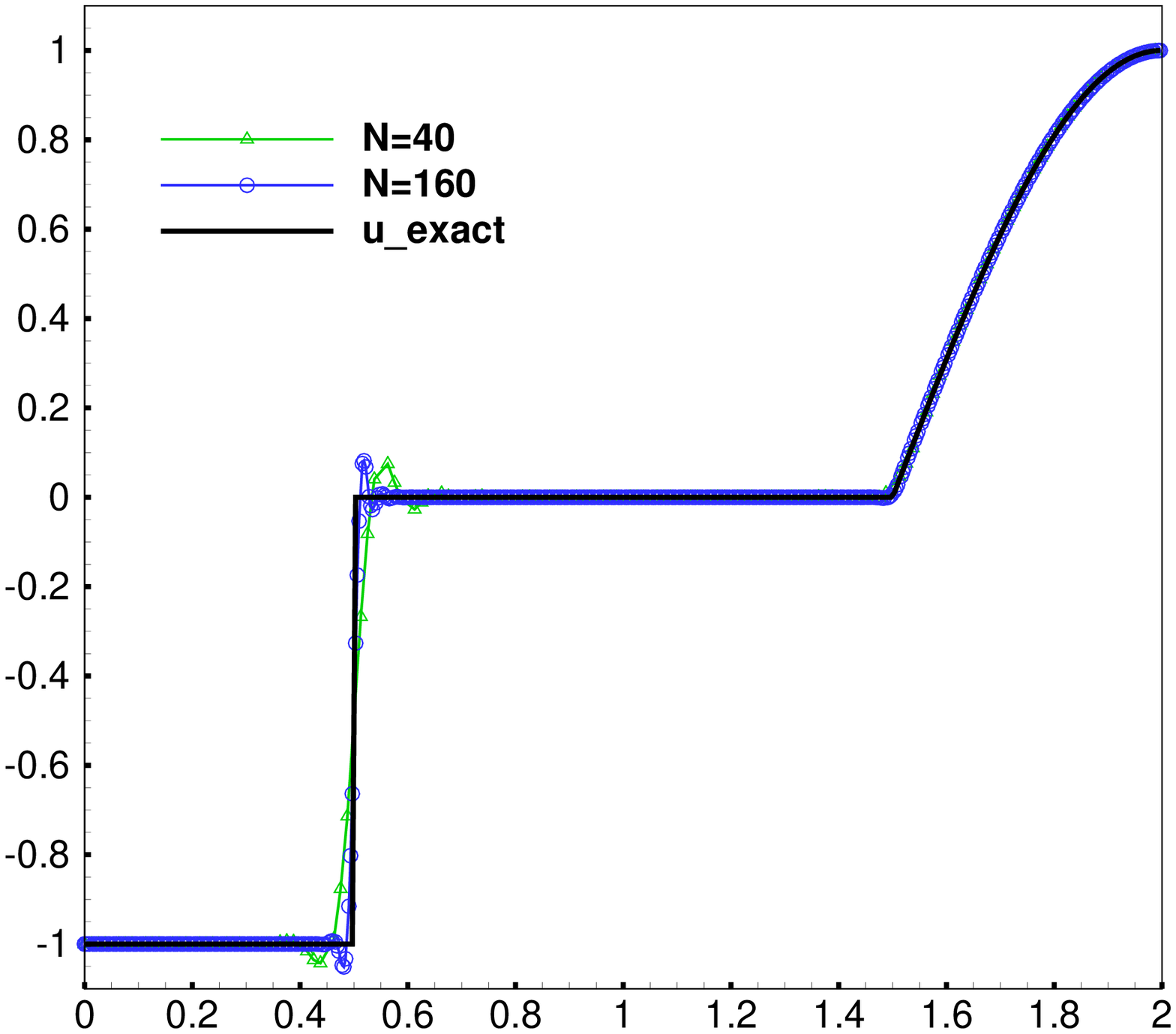}
  \includegraphics[width=2.2in]{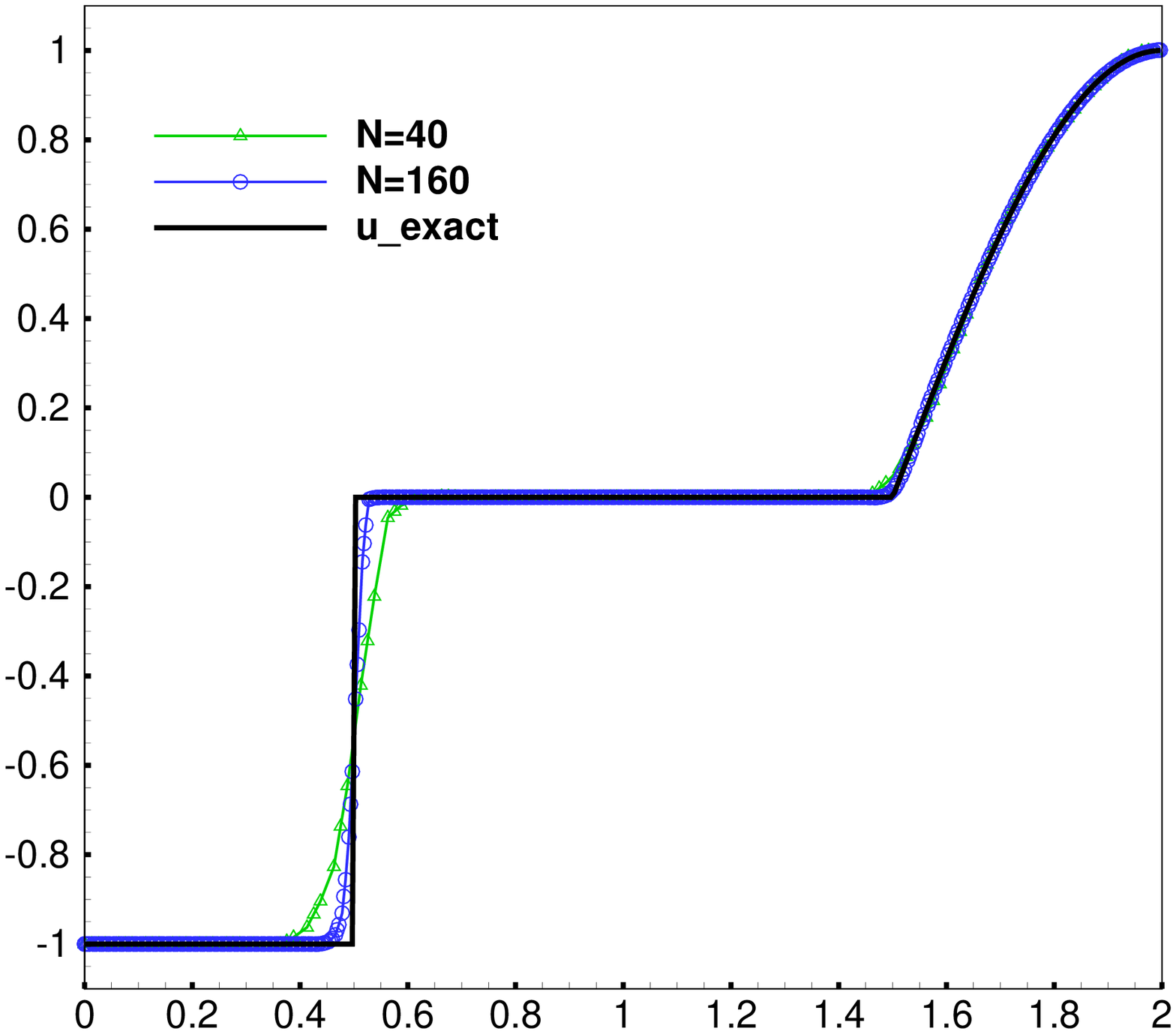}
  \caption{Numerical $P^2$ solutions for the advection equation with non-smooth boundary data, and one small cut element with $\alpha=10^{-2}$ at $x=0$. Left: without limiter, right: with limiter. 
 }\label{exe:nonsmoothBC}
\end{figure}

\subsection{Nonlinear case: Burgers' equation}
In this subsection, we apply the high order stabilized cut DG scheme \eqref{scheme:cutDG2} with Godunov flux to the Burgers' equation 
\begin{align}\label{eq:Burges}
&u_t+\left(\frac{u^2}{2}\right)_x=0,\quad x\in [x_l,x_r], \; t>0.
\end{align}
In these computation we used Courant number $\lambda=\alpha+\frac{\rM}{\rM+1},0.3,0.2,0.1$ for polynomial spaces $P^r$ with $r=0,1,2,3$,  respectively. In all computation for Burgers' equation, we use the mesh setting  in Figure \ref{fig:cutmiddle1} with many cut elements  located in an interval in 
the central part of the domain. The cut elements are created by splitting
each regular element in the interval into one small cut element of size $\alpha_k h$ and another of size $(1-\alpha_k)h$. Here $\alpha_k=s\alpha$ with $\alpha=10^{-4}$ and $s$ a random number  in $ [0.01,1]$.

\subsubsection{Smooth initial data}
We first test the Burgers' equation \eqref{eq:Burges} with smooth  initial data $u_0(x)=\sin(\pi x), x\in[0,2]$ and periodic boundary condition. This problem has a known solution, which we use as a reference when computing errors. The solution is initially smooth, but at $t=\frac{1}{\pi}$ a shock forms at $x=1$.  We compute the problem for accuracy until time $t=0.2$, which is before the shock appears.  The uniform meshes with $N=40,80,160,320,640$ elements are used as the background mesh. The cut elements are located in $[0.75,1.25]$ with $N/4$ small cut elements. In Figure \ref{table:burges2} the errors are shown and the slope of the error lines are given in the legend for $r=0,1,2,3$.   Observe that our method has optimal accuracy also in this case.  We also ran this problem with more small cut elements, located on $[0.5,1.5]$, with very similar result.
\begin{figure}[tbhp]
  \centering
  \begin{tikzpicture}
    \begin{loglogaxis}
    [height=1.9in, grid=major,
      xlabel={$h$}, ylabel={Error},
      legend entries={$P^0 L^2\,0.90$,$P^0 L^\infty \,0.89$,$P^1 L^2 \,2.01$,$P^1 L^\infty \,1.90$,$P^2 L^2 \,3.04$,$P^2 L^\infty \,2.71$,$P^3 L^2 \,4.06$, $P^3 L^\infty \,3.74$}, 
      legend pos=outer north east
       ]
\addplot table [x=h, y=P0_L2] {Burgers_cutmorecells_Minus4_error_Nover4.dat};
\addplot table [x=h, y=P0_Linf] {Burgers_cutmorecells_Minus4_error_Nover4.dat};
\addplot table [x=h, y=P1_L2] {Burgers_cutmorecells_Minus4_error_Nover4.dat};\addplot table [x=h, y=P1_Linf]{Burgers_cutmorecells_Minus4_error_Nover4.dat};\addplot table [x=h, y=P2_L2]{Burgers_cutmorecells_Minus4_error_Nover4.dat};
\addplot table [x=h, y=P2_Linf] {Burgers_cutmorecells_Minus4_error_Nover4.dat};
\addplot table [x=h, y=P3_L2] {Burgers_cutmorecells_Minus4_error_Nover4.dat};\addplot table [x=h, y=P3_Linf] {Burgers_cutmorecells_Minus4_error_Nover4.dat};
 \end{loglogaxis}
 \end{tikzpicture}
  \caption{$L^2$ and $L^{\infty}$  errors of  $u_h$ from scheme \eqref{scheme:cutDG2} on the mesh with $\alpha=10^{-4}$ and cut elements in $[0.75,1.25]$ for the Burgers' equation \eqref{eq:Burges} with periodic boundary condition at $t=0.2$. }
  \label{table:burges2}
\end{figure}
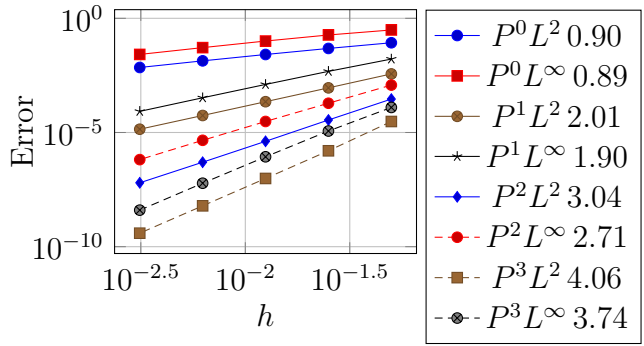

Next we solved the Burgers' equation with $P^2$ approximation with  cut elements in $[0.75,1.25]$ until  time $t=0.5$,  when the shock has been formed. Results for $h=1/40,1/160$ (corresponding to $20, 80$ small cut elements) with standard and modified limiting   are shown in the Figure \ref{figure:burgesshock2}.  With standard limiting there are  some overshoots and oscillations near the shock and at the cut elements. With the modified limiting in Appendix B, there are no such artefacts. 

\begin{figure}[tbhp]
  \centering
   \includegraphics[width=2.2in]{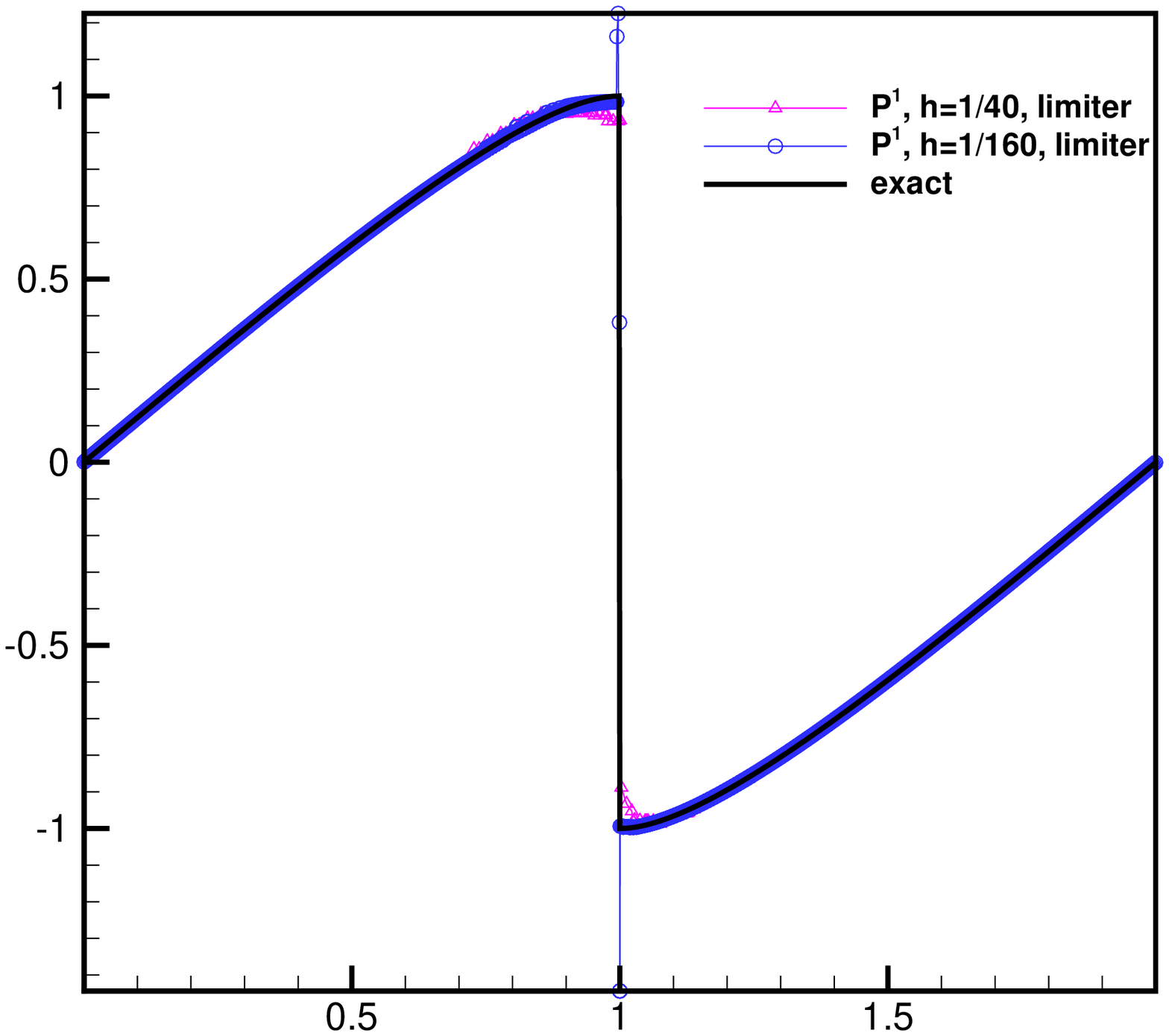}
 \includegraphics[width=2.2in]{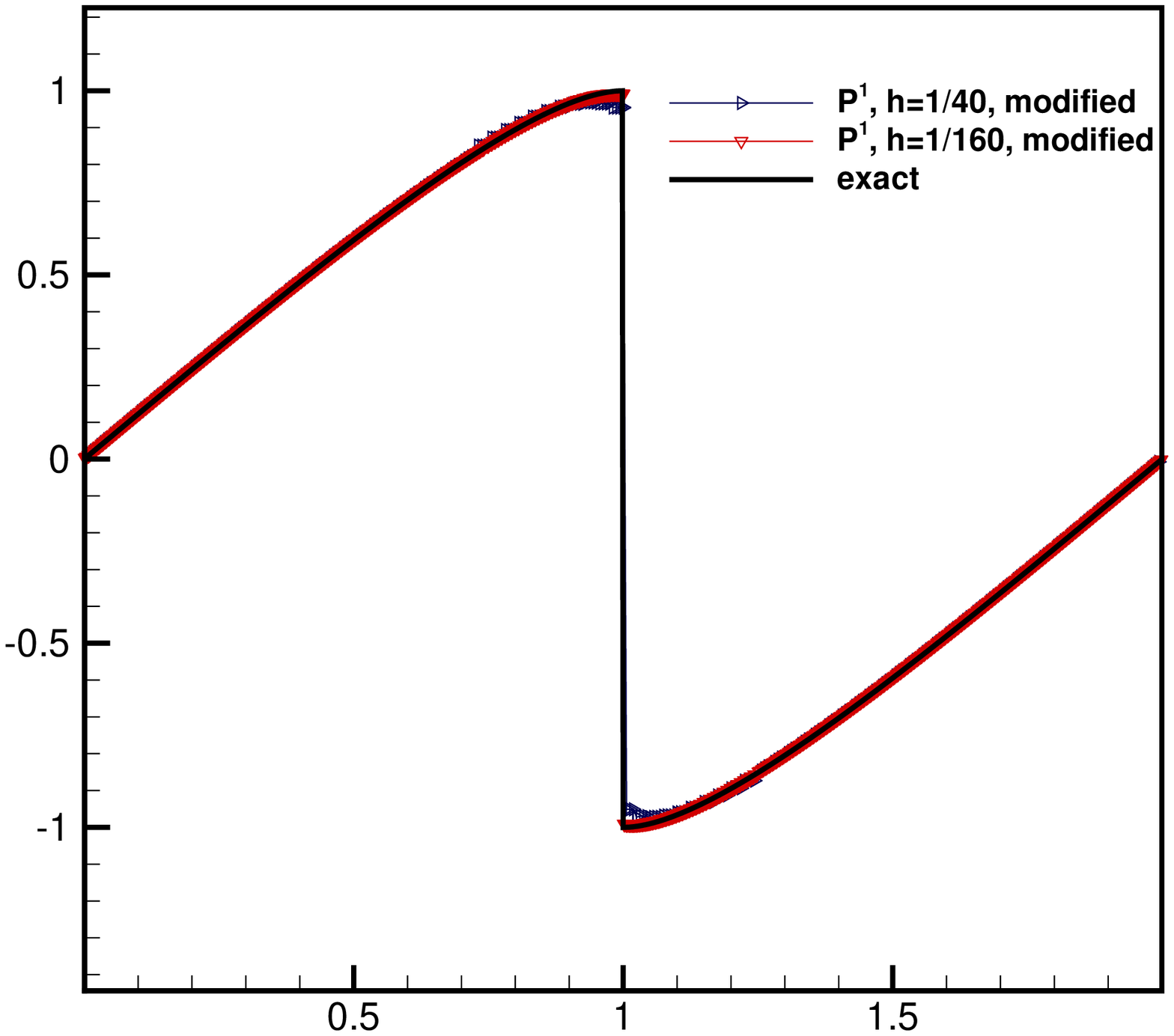}
  \caption{The numerical solutions of Burgers' equation \eqref{eq:Burges} at $t=0.5$,  by the $P^2$ scheme \eqref{scheme:cutDG2}, with standard (left) and modified (right) limiting. All elements in $[0.75,1.25]$ are cut.}\label{figure:burgesshock2}
\end{figure}

\subsubsection{Riemann problems}
Consider Burgers' equation \eqref{eq:Burges}  with initial data
\begin{align*}
u_0(x)=
\left\{\begin{array}{ll}
{u_l,} & {x\leq0,} \\
{u_r,} & {x>0.} 
\end{array}\right.
\end{align*}
We will use our proposed stabilized cut DG scheme \eqref{scheme:cutDG2} with the $P^0$ and $P^1$ approximations, 
and let all elements in $[-0.5,0.5]$ be cut.

First we let $u_l=-1<0<u_r=1$. In this case, a rarefaction wave is the weak solution, which satisfies the entropy condition. We solve this problem up to time $t=0.5$ with an outflow boundary condition on the left side and an inflow boundary on the right side. The results are shown in Figure \ref{figure:Rarefaction}. The cut DG scheme based on $P^0$ can simulate this rarefaction wave well. For the $P^1$ approximation, overshoots are observed near the contact points, $-0.5$ and $0.5$, when no limiter is applied. With the standard TVB limiter, these oscillations disappear.
However, oscillations are instead introduced in cut elements inside the rarefaction. 
  With the modified limiting, no oscillations are observed on any meshes we tested.
\begin{figure}[bthp]
  \centering
  \includegraphics[width=2.2in]{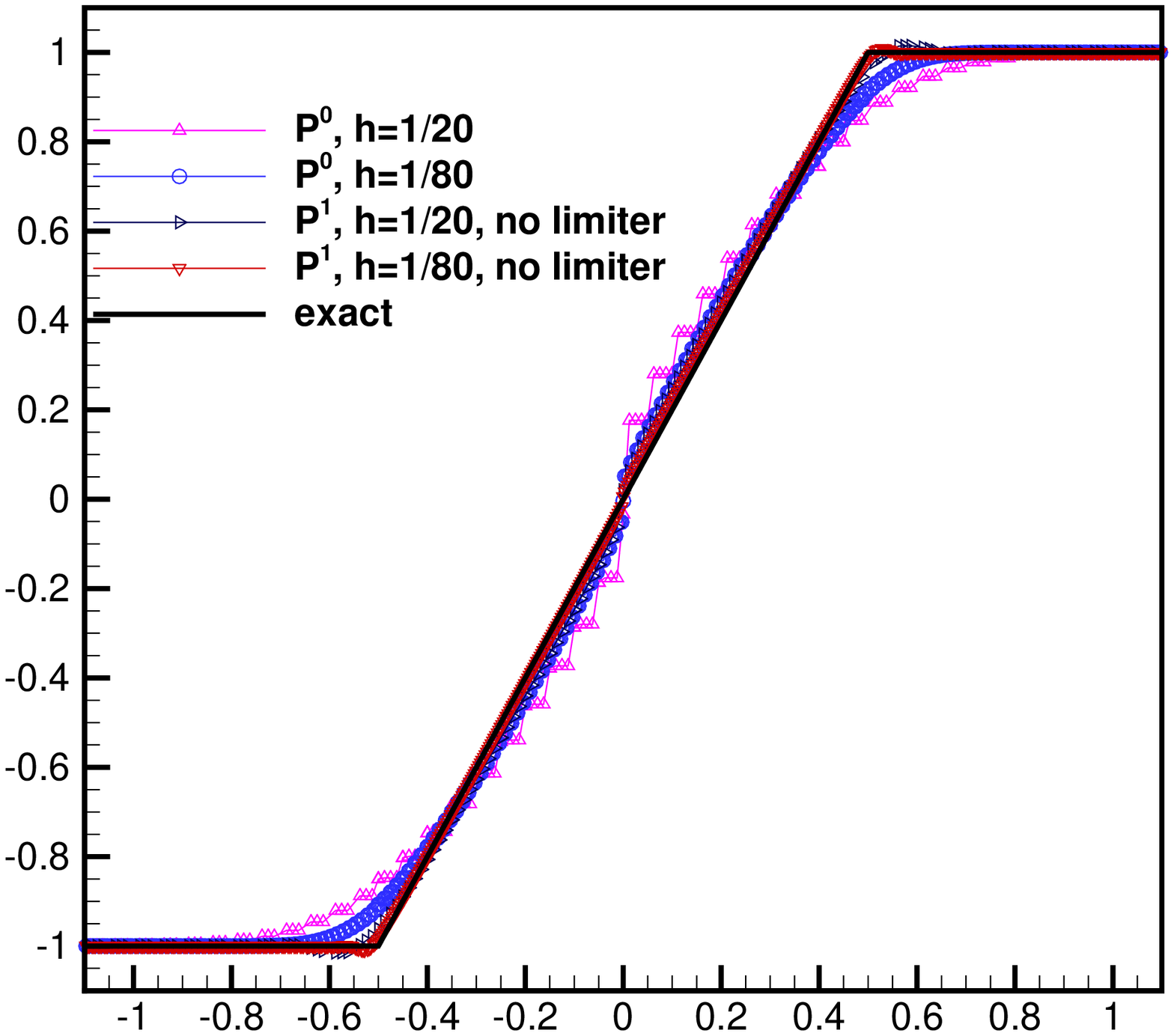}
  \includegraphics[width=2.2in]{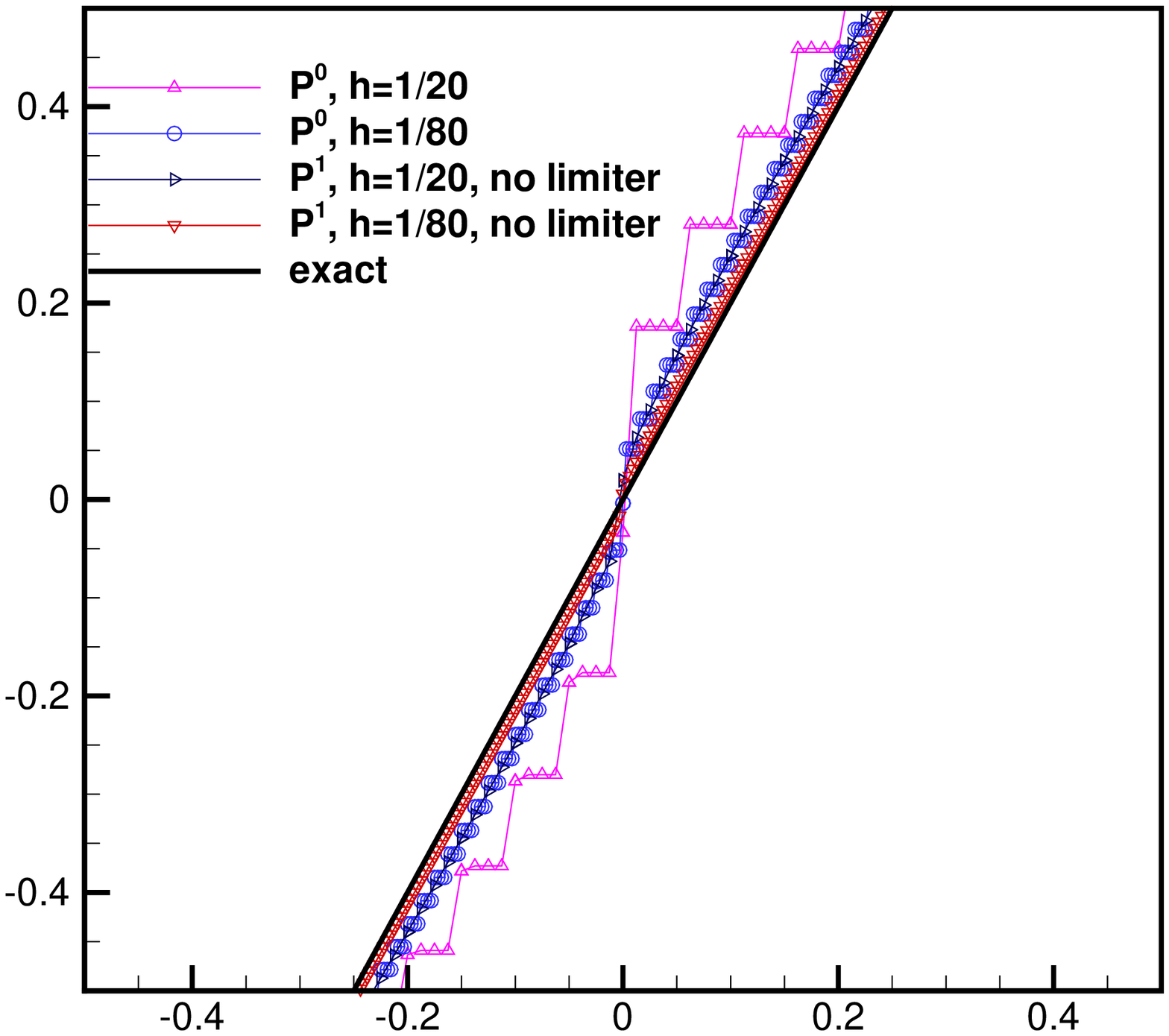}
   \includegraphics[width=2.2in]{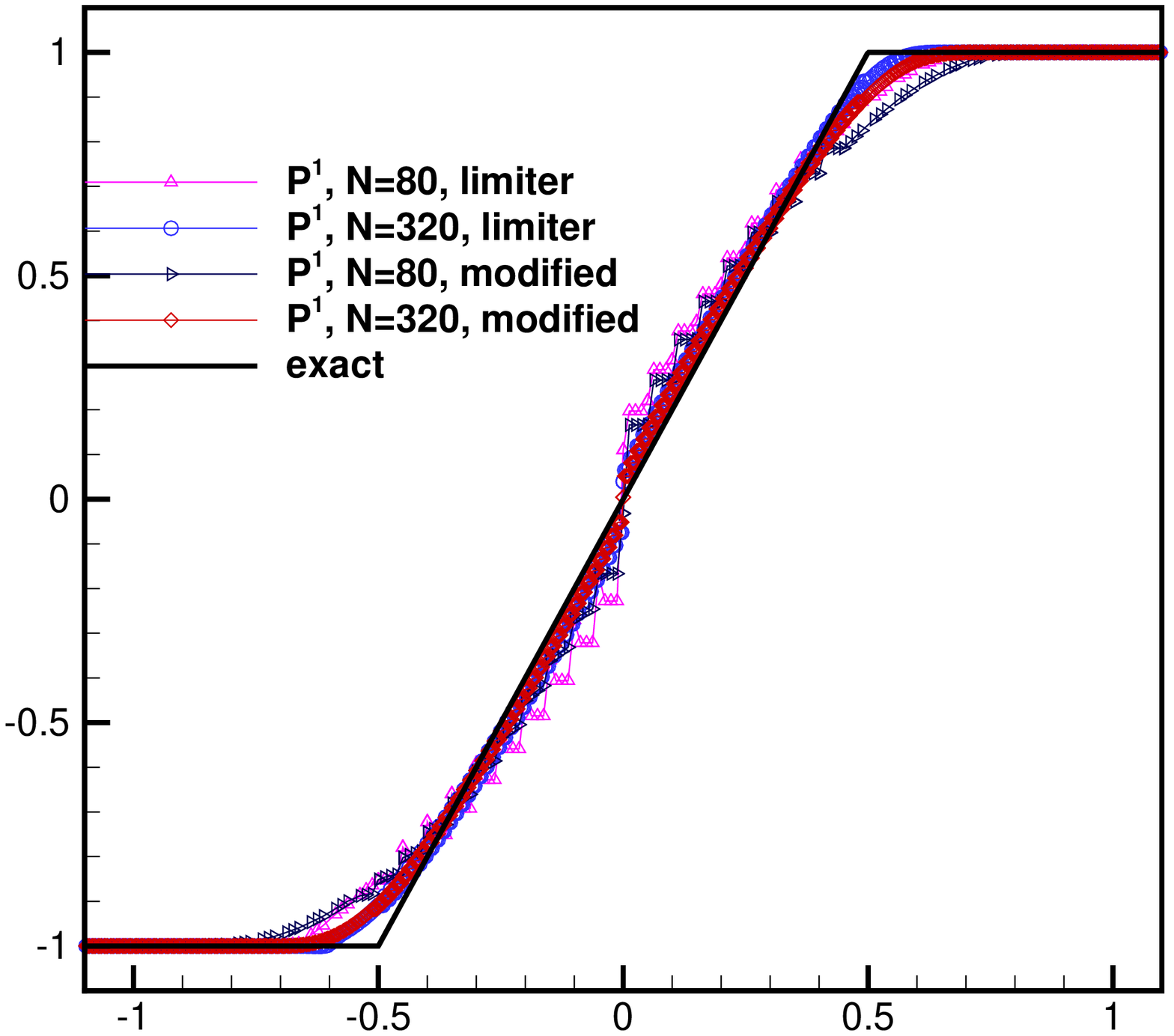}
  \includegraphics[width=2.2in]{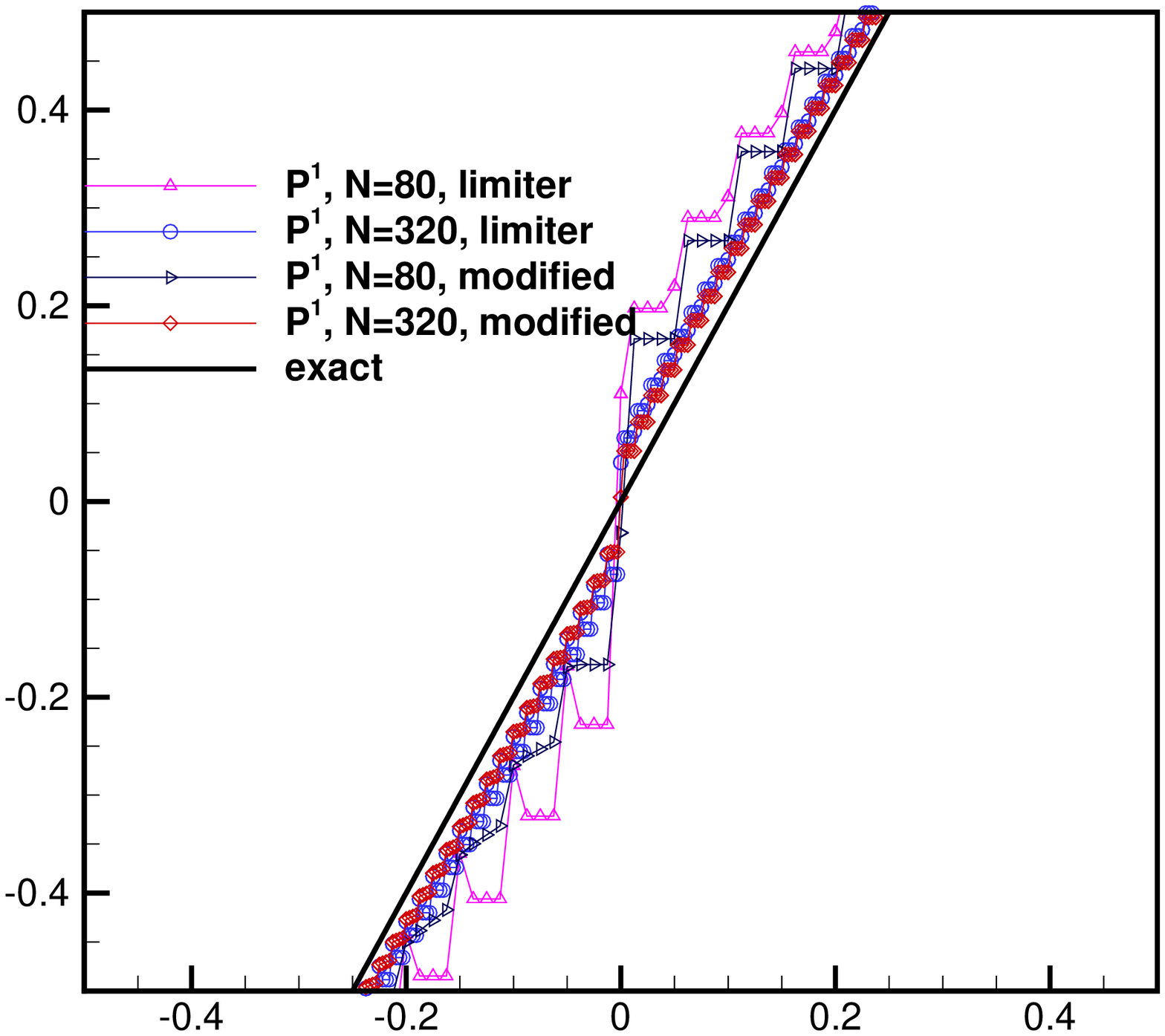} 
  \caption{Numerical solutions at $t=0.5$ for the Burgers'  equation Riemann problem with a rarefaction solution, based on $P^0$ or on $P^1$, with different limiting options. The figures to the right are magnifications of the left figures. All elements in $[-0.5,0.5]$ are cut. }\label{figure:Rarefaction}
\end{figure}

Next we consider the case with initial data $u_l=1>0>u_r=-0.5$, for which the exact solution has a shock moving with speed $v=1/4$. We solve using our cut DG scheme based on $P^0$ and $P^1$ polynomials up to time $t=0.5$ and $t=4$. The results are shown in Fig \ref{figure:RiemannShock}. We can observe that the piecewise constant approximation can capture the shock, and we did not observe any overshoots or undershoots up to $t=4$.  For the $P^1$ approximation at $t=0.5$, we observe some undershoots on the coarser meshes, which disappear when we apply the modified limiter. At time $t=4$, the shock has passed all the cut elements as well as their neighbours, and we see no undershoots or oscillations  even on the coarser meshes when standard limiting is applied.
\begin{figure}[tbhp]
  \centering
  \includegraphics[width=2.2in]{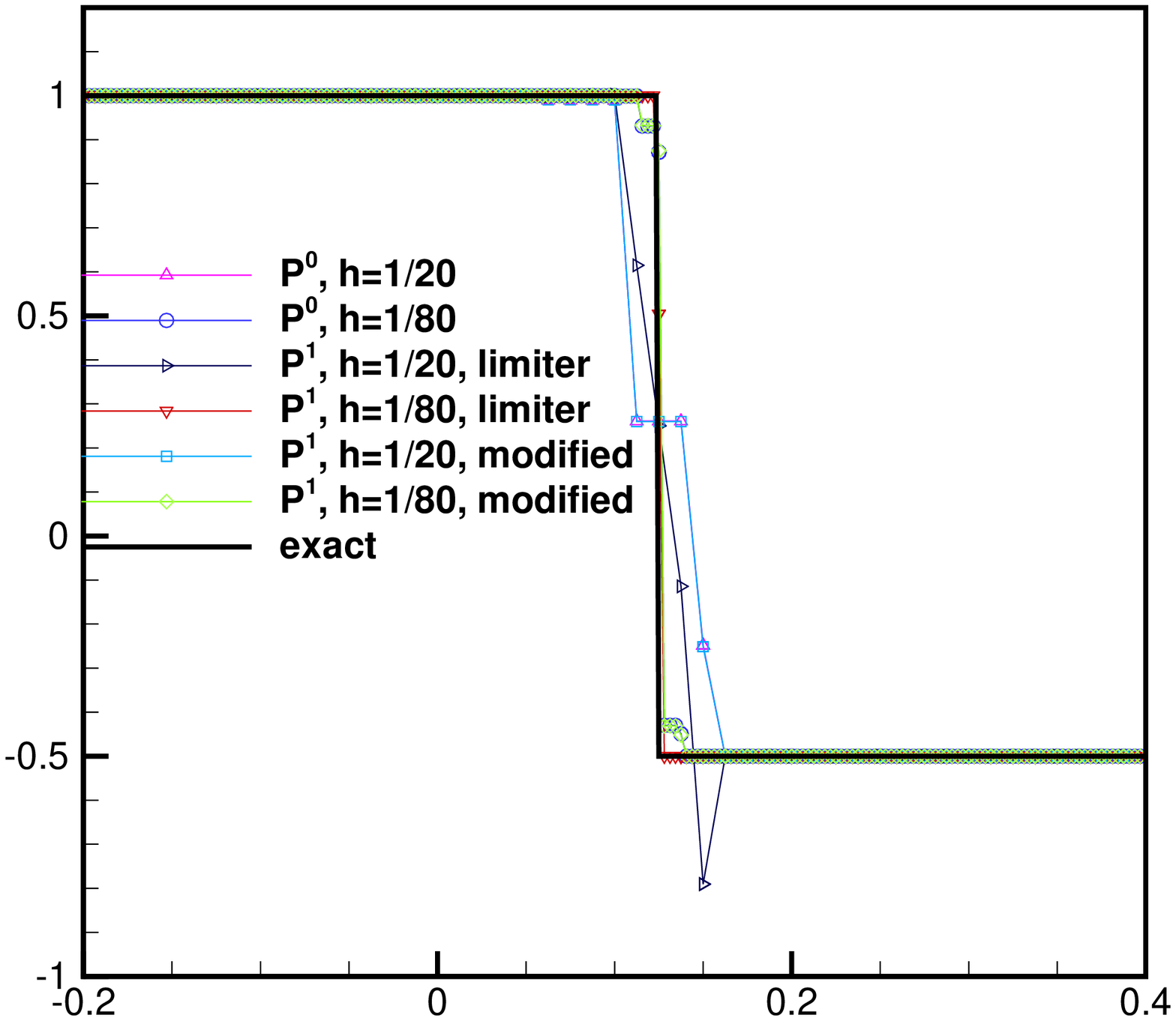}
  \includegraphics[width=2.2in]{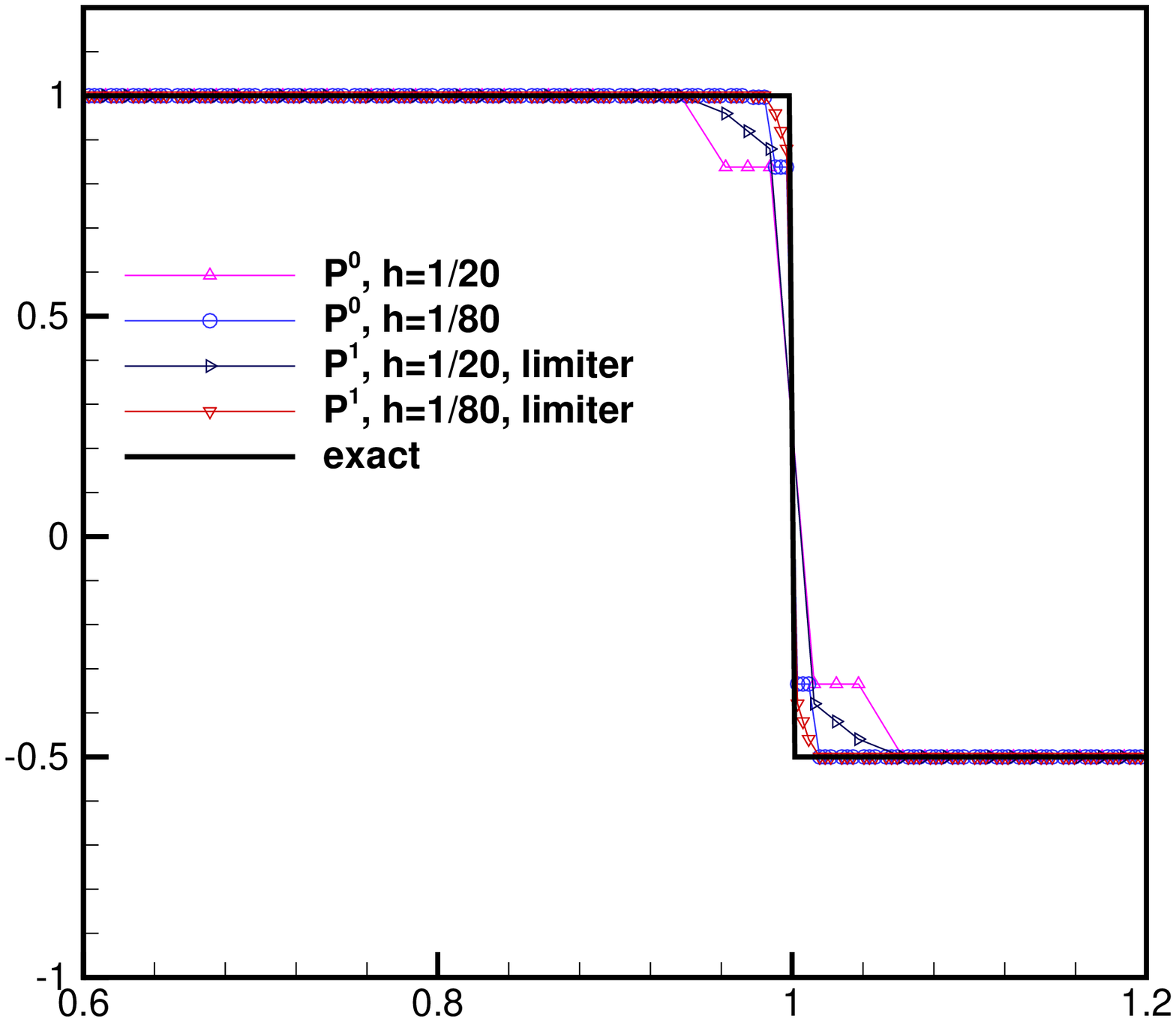}
  \caption{Numerical solutions of the Burgers' equation Riemann problem with a shock wave solution, based on $P^0$ or on $P^1$ with different limiting options. All elements in $[-0.5,0.5]$ are cut. Left: $t=0.5$, right: $t=4$.}\label{figure:RiemannShock}
\end{figure}

\section{Conclusions and future work}
\label{sec:conclusion}

We have developed a stabilized cut DG method of different orders for scalar first order hyperbolic problems in one space dimension. To avoid severe time-step restrictions or temporal instability the method includes jump stabilization at element interfaces adjacent to cut elements.  Theoretical results include $L^2$ stability for the semi-discrete  method independently of how small the cut elements are, and an accuracy result  for the linear problems, based on the unfitted $L^2$ projection. For discontinuous solutions TVD stability is essential, and we prove that the stabilized cut DG method based on piecewise constants is TVD.
From an analysis of the eigenvalues of the spatial discretization, we expect  similar Courant numbers as for the standard DG method.

Numerical experiments further investigate the properties of the methods, and demonstrate several important features.  A first result is that the CFL conditions for our cut DG methods are very similar to the CFL conditions of the corresponding standard DG methods. Secondly, a series of computations demonstrate that for smooth solutions we obtain optimal accuracy, even though the corresponding theoretical result is weaker by half an order. Thirdly,  we could observe the TVB property numerically when a TVB or a TVD limiter is applied to our scheme. 
With standard limiting  overshoots are observed as a discontinuity or shock passes a cut element or its neighbours, when higher order polynomials  are used.
Finally, we propose a modified limiting procedure involving the stabilization, which removes these artefacts.

The analysis and computations presented here can be extended to problems with interfaces where coefficients change abruptly. Of particular importance are the conservation properties at such interfaces, and we have started the investigation, and will present results elsewhere. Further analysis also includes investigating more robust limiters to control oscillations on the cut elements and if a cut element and the corresponding stabilization introduces extra dissipation, compared to the standard DG method.
 The cut DG method in this paper will also be extended to systems and to multiple dimensions in future work. We plan to consider standard cartesian elements, which are allowed to be arbitrarily cut by boundaries and/or interfaces, together with ghost stabilization for both mass and stiffness matrix. We expect, as for second order wave equations (see \cite{sticko2016higher}), that the approach will yield $L^2$ stability, high order accuracy, and reasonable CFL-numbers also for systems, and for problems in multiple space dimensions. The positive experience reported in this paper, of applying standard DG techniques, gives reason to believe that the  approach can also be applied to the extensions.
\appendix
\section{The condition numbers and  eigenvalues from standard DG method}
In Table \ref{table:uniform}, we  give the condition number of mass matrix ${\mathcal{M}}$, and maximal absolute value and maximal real part of the eigenvalue $v_i$ of the spatial operator (${\mathcal{M}}^{-1}{\mathcal{S}}$) from the standard DG scheme. It is used to compared the results from the stabilized cut DG scheme.
\begin{table}[tbhp]
\caption{\label{table:uniform} {Condition number of  mass matrix ${\mathcal{M}}$ and maximal absolute value and real part of the eigenvalue $v_i$ of the spatial operator (${\mathcal{M}}^{-1}{\mathcal{S}}$) in the standard DG scheme on the uniform mesh without cut elements (N=7 elements on the domain $[0,2]$).
} }
\begin{small}
\begin{tabular}{|c|ccc|}
\hline
degree	&	$\mathcal{K(M)}$ & $\max(|v_i|)$	&	$\max(Re(v_i))$\\
 \hline
$P^0$	&	1.00E+00	&	6.82E+00	&	1.04E-15	\\
$P^1$	&	3.00E+00	&	2.10E+01	&	2.60E-16	\\
$P^2$	&	1.13E+01	&	4.11E+01	&	2.60E-15	\\
$P^3$	&	4.38E+01	&	6.70E+01	&	9.26E-16	\\
$P^4$	&	1.72E+02	&	9.67E+01	&	-1.18E-15	\\
\hline
\end{tabular}
\end{small}
\end{table}

\section{Modified limiting for the total variation stability}\label{sec:modifiedtvd}
In this appendix, we describe a modified stabilized cut DG scheme for  problems with discontinuities. For simplification, the forward Euler method is used to present the scheme. Starting with $u^n_h$  compute $u_h^{n+1}$ by the following steps.
\begin{itemize}
\item The \textit{minmod} limiter is used as an indicator to check if the cut element or its neighbours are near a discontinuity or not.
\item If no,  we use the stabilized cut DG \eqref{scheme:cutDG2} scheme without modification to update the solution $u_h^{n+1}$. 
\item If  yes, the cut element or its neighbour needs to be limited and we define the limited solution $u_h^{(mod)}|_{I_J}=\bar{u}_h|_{I_J}$. 
Then, we modify the stabilization and polynomial space in the stabilized cut DG scheme with $P^0$ approximation on the cut element and its neighbour elements which need be stabilized, which is
\begin{align}
 \int_{I_J}u_h^{n+1}vdx+\rM h[u_h^{n+1}]_{Jl}v_{Jl}=\int_{I_J}u_h^{mod,n}vdx+\rM h[u_h^{mod,n}]_{Jl}v_{Jl}\notag\\
 \quad-\widehat{f}({u}_h^{mod,n}(x_{Jr},t))v^-_{Jr}+\widehat{f}({u}_h^{mod,n}(x_{Jl},t))v^+_{Jl}-\rA [u_h^{mod,n}]_{Jl}v_{Jl},\forall v\in V_h^0,\notag\\
 \int_{I_K}u_h^{n+1}vdx-\rM h[u_h^{n+1}]_{Jl}v_{Jl}=\int_{I_K}u_h^{mod,n}vdx-\rM h[u_h^{mod,n}]_{Jl}v_{Jl}\notag\\
 -\widehat{f}({u}_h^{mod,n}(x_{Kr},t))v^-_{Kr}+\widehat{f}({u}_h^{mod,n}(x_{Kl},t))v^+_{Kl}\notag\\
 \quad+\rA [u_h^{mod,n}]_{Jl}v_{Jl}-\int_{I_K} f(u_h^{mod,n})vdx, \forall v\in V_h^0.
 \label{scheme:tvd:limitcut}
\end{align}
Here, $I_J$ is the cut element.  We assume  $I_K$  is its neighbour element which need to stabilized and $x_{JL}$ is the interior interface between $I_J$ and $I_K$.
\item Update numerical solution $u_h^{n+1}$. Here, $u_h^{n+1}$ on elements $I_J,I_K$ are constants and $u_h^{n+1}$ is  piecewise high order polynomial on the other elements.
\end{itemize}
We note that we just modify the stabilized cut DG scheme on the cut element and its neighbours when the limiting is needed at the cut element or its neighbours. It won't change the scheme in the problems with continuous solution.

\renewcommand\refname{Reference}
\bibliographystyle{abbrv}
\bibliography{CutDG}

\end{document}